\documentclass[hidelinks,onefignum,onetabnum]{siamart250211}



\usepackage{lipsum}
\usepackage{amsfonts}
\usepackage{graphicx}
\usepackage{epstopdf}
\usepackage{algorithmic}
\usepackage{enumitem}
\usepackage{xcolor}
\usepackage{tikz}
\usepackage{float} 
\setlength{\textfloatsep}{7pt} 
\setlength{\intextsep}{7pt}  
\usetikzlibrary{arrows.meta, positioning, calc, shapes, fit, backgrounds}
\ifpdf
  \DeclareGraphicsExtensions{.eps,.pdf,.png,.jpg}
\else
  \DeclareGraphicsExtensions{.eps}
\fi


\newcommand{\E}{\mathbb{E}}   
\newcommand{\Var}{\mathrm{Var}}
\newcommand{\Cov}{\mathrm{Cov}}
\newcommand{\eps}{\varepsilon}
\newcommand{\blue}[1]{\textcolor{blue}{#1}}
\newcommand{\red}[1]{\textcolor{red}{#1}}

 

\newsiamremark{remark}{Remark}
\newsiamremark{hypothesis}{Hypothesis}
\crefname{hypothesis}{Hypothesis}{Hypotheses}
\newsiamthm{claim}{Claim}
\newsiamremark{fact}{Fact}
\crefname{fact}{Fact}{Facts}

\headers{Structure-Preserving Neural Surrogates for Kinetic UQ}{W. Chen, G. Dimarco, and L. Pareschi}

\title{Structure and asymptotic preserving deep neural surrogates for uncertainty quantification in multiscale kinetic equations\thanks{The work of Wei Chen was partially supported by the China Scholarship Council. Wei Chen also acknowledge the hospitality of the University of Ferrara. 
  The work of Giacomo Dimarco was partially supported by the Italian Ministry of University and Research (MUR) through the PRIN
2020 project (No. 2020JLWP23) ``Integrated Mathematical Approaches to Socio–Epidemiological Dynamics”. The work of Lorenzo Pareschi was partially supported by the Royal Society under the Wolfson Fellowship ``Uncertainty quantification, data-driven simulations and learning of multiscale complex systems governed by PDEs". The partial support by the European Union through the Future Artificial Intelligence Research (FAIR) Foundation, ``MATH4AI" Project and by ICSC -- Centro Nazionale di Ricerca in High Performance Computing, Big Data and Quantum Computing, funded by European Union -- NextGenerationEU and by the Italian Ministry of University and Research (MUR) through the PRIN 2022 project (No. 2022KKJP4X) ``Advanced numerical methods for time dependent parametric partial differential equations with applications" is also acknowledged. This work has been written within the activities of GNFM and GNCS groups of INdAM (Italian National Institute of High Mathematics).}}

\author{Wei Chen\thanks{School of Mathematical Sciences, Xiamen University, China 
  (\email{weichenmath@stu.xmu.edu.cn}).}
\and Giacomo Dimarco\thanks{Department of Mathematics and Computer Science, University of Ferrara, Italy 
  (\email{giacomo.dimarco@unife.it}).}
\and Lorenzo Pareschi\thanks{Maxwell Institute for Mathematical Sciences and Department of Mathematics, School of Mathematical and Computer Sciences, Heriot-Watt University, Edinburgh, UK
  (\email{l.pareschi@hw.ac.uk}). Also affiliated with Department of Mathematics and Computer Science, University of Ferrara, Italy.}}

\usepackage{amsopn}


\ifpdf
\hypersetup{
  pdftitle={Structure and asymptotic preserving deep neural surrogates for uncertainty quantification in multiscale kinetic equations},
  pdfauthor={W. Chen, G. Dimarco, and L. Pareschi}
}
\fi




\begin{document}

\maketitle

\begin{abstract}
The high dimensionality of kinetic equations with stochastic parameters poses major computational challenges for uncertainty quantification (UQ). Traditional Monte Carlo (MC) sampling methods, while widely used, suffer from slow convergence and high variance, which become increasingly severe as the dimensionality of the parameter space grows.
To accelerate MC sampling, we adopt a multiscale control variates strategy that leverages low-fidelity solutions from simplified kinetic models to reduce variance. To further improve sampling efficiency and preserve the underlying physics, we introduce surrogate models based on structure and asymptotic preserving neural networks (SAPNNs). These deep neural networks are specifically designed to satisfy key physical properties, including positivity, conservation laws, entropy dissipation, and asymptotic limits.
By training the SAPNNs on low-fidelity models and enriching them with selected high-fidelity samples from the full Boltzmann equation, our method achieves significant variance reduction while maintaining physical consistency and asymptotic accuracy.
The proposed methodology enables efficient large-scale prediction in kinetic UQ and is validated across both homogeneous and nonhomogeneous multiscale regimes. Numerical results demonstrate improved accuracy and computational efficiency compared to standard MC techniques.
\end{abstract}

\begin{keywords}
uncertainty quantification, kinetic equations, multifidelity methods, deep neural networks, Monte Carlo sampling, surrogate models, structure-preserving methods, asymptotic-preserving methods. 
\end{keywords}

\begin{MSCcodes}
  65C05,  
  65C20,  
  82C40,  
  68T07,  
  35Q20   
\end{MSCcodes}

\tableofcontents

\section{Introduction}

Kinetic equations naturally arise in the statistical description of large systems of particles evolving over time and are employed to model a wide range of phenomena across diverse fields, including rarefied gas dynamics, plasma physics, biology, and socioeconomics \cite{Cercignani1988Y,dimarco2014numerical,villani2002review}. 
The Boltzmann equation, as the most fundamental kinetic equation, describes the statistical behavior of a thermodynamic system by accounting for both particle transport and binary collisions \cite{Cercignani1988Y}:
\begin{equation}
\label{kinetic}
  \partial_t f + v \cdot \nabla_x f = \frac{1}{\eps} Q(f, f),
\end{equation}
with the nonnegative distribution function $f = f(z, x, v, t)$, velocity $v \in \mathbb{R}^{d_v}$, position $x \in \Omega_x \subset \mathbb{R}^{d_x}$, time $t \geq 0$, and a random variable $z \in \Omega_z \subset \mathbb{R}^{d_z}$, where $d_v, d_x, d_z \geq 1$. 
The parameter $\eps > 0$ is the Knudsen number, and the collision operator which describes the effects of particle interactions can be written as:
\begin{equation}
  Q(f, f)(z, v) \!= \!\int_{S^{d_v - 1} \times \mathbb{R}^{d_v}} \!B(v, v_*, \omega, z)(f(z, v') f(z, v_*') - f(z, v) f(z, v_*))  {\rm d}v_*{\rm d}\omega,
\label{eq:coll}
\end{equation}
where the dependence from $x$ and $t$ has been omitted and
\begin{equation*}
  v' = \frac12 (v + v_*) + \frac12 (\vert v - v_* \vert \omega), \quad v'_* = \frac12 (v + v_*) - \frac12 (\vert v - v_* \vert \omega).
\end{equation*}
Although the Boltzmann equation is widely applicable, the complexity of the collision operator $Q(f,f)$ presents significant challenges for both analytical treatment and numerical computation. 
To address this, simplified collision models have been developed to capture the key features of the full Boltzmann operator. 
Among these, the Bhatnagar-Gross-Krook (BGK) model \cite{bhatnagar1954model}, which posits a simple relaxation process toward equilibrium, has been widely adopted due to its computational efficiency.
For BGK model, the collision operator is defined by
\begin{equation}
  Q(f, f) = \mu(M[f] - f),
  \label{eq:BGK}
\end{equation}
where $\mu >0$ denotes the collision frequency and the local Maxwellian function has the form
\begin{equation*}
  M[f] = \frac{\rho}{(2 \pi R T)^{d_v / 2}} \exp{\left(- \frac{\vert u - v \vert^2}{2 R T}\right)},
\end{equation*}
with the macroscopic density $\rho$, mean velocity $u$ and temperature $T$
\begin{equation*}
  \rho = \int_{\mathbb{R}^{d_v}} f ~{\rm d}v, \quad u = \frac{1}{\rho} \int_{\mathbb{R}^{d_v}} vf ~{\rm d}v, \quad T = \frac{1}{d_v R \rho} \int_{\mathbb{R}^{d_v}} (v - u) ^ 2f ~{\rm d}v.
\end{equation*}
Here, $R$ is called the gas constant.
By multiplying the BGK model by its collision invariants $\phi(v) = \left( 1, v, |v|^2/2 \right)^{\rm T}$ and integrating over velocity space, we obtain the moment equations:
\begin{equation}
\label{moment}
  \partial_t \int_{\mathbb{R}^{d_v}} f \phi(v) ~{\rm d}v + \nabla_x \cdot \int_{\mathbb{R}^{d_v}} v f \phi(v) ~{\rm d}v = 0.
\end{equation}
This procedure ensures consistency with the macroscopic conservation laws of mass, momentum, and energy.

Moreover, both the Boltzmann and BGK models satisfy an entropy inequality that reflects the second law of thermodynamics. This is typically formulated through the entropy functional
\begin{equation}
  H(f) = \int_{\mathbb{R}^{d_v}} f \log f ~{\rm d}v,
  \label{eq:H}
\end{equation}
which is non-increasing in time for spatially homogeneous solutions. This property plays a crucial role in ensuring physically realistic dynamics and will be used later in our framework to calibrate the BGK model.

As $\eps \to 0$, the closed system of compressible Euler equations is formally derived through \eqref{moment} \cite{dimarco2014numerical}:
\begin{equation}
\label{Euler}
\left\{
\begin{aligned}
  &\partial_t \rho + \nabla_x \cdot \left( \rho u\right) = 0, \\
  &\partial_t \left(\rho u\right)+ \nabla_x \cdot \left( \rho u \otimes u + pI\right) = 0, \\
  &\partial_t E + \nabla_x \cdot \left( \left(E + p\right) u\right) = 0, \\
\end{aligned}
\right.
\end{equation}
where $p = \rho R T$ denotes the pressure, $I$ is the identity matrix, and $E = \rho \left(\vert u \vert ^2 + \right.$ $\left.d_v R T\right) / 2$ is the total energy.

In practice, uncertainties may arise in initial/boundary conditions and parameters, such as microscopic interaction details, due to incomplete knowledge or imprecise measurements. Recent systematic studies categorize uncertainty quantification (UQ) methods into two primary approaches: Monte Carlo (MC) \cite{dimarco2019multi,dimarco2020multiscale,caflisch1998monte,fairbanks2017low,giles2015multilevel,hu2021uncertainty,mishra2012multi,peherstorfer2018convergence,peherstorfer2016optimal} sampling and Stochastic-Galerkin (SG) \cite{hu2016stochastic,liu2020bi,shu2017stochastic,xiu2010numerical,Liu24} methods. 
While SG methods based on generalized polynomial chaos demonstrate spectral accuracy for smooth solutions, they suffer from the curse of dimensionality and require intrusive numerical implementations. 
In contrast, MC sampling methods exhibit slow convergence rates but efficiently mitigate dimensionality challenges and enable easy-to-implement non-intrusive applications. 
Prior work \cite{dimarco2019multi,dimarco2020multiscale} introduced control variate and multiple control variate techniques that leverage the multiscale nature of kinetic equations to accelerate MC convergence. Nevertheless, deterministic solutions for simplified models with large sample sizes remain computationally expensive. 
To address this limitation, using physics-informed neural networks (PINNs) to solve simplified models can be regarded as a potential alternative.

PINNs have emerged as a powerful framework for embedding physical laws into data-driven models, enabling the solution of complex differential equations through constrained optimization \cite{karniadakis2021physics,raissi2019physics}. 
By embedding observational or learning biases into loss functions, PINNs enforce adherence to governing equations such as ODEs/PDEs, offering mesh-free flexibility and robustness in high-dimensional settings. 
However, their application to multiscale systems faces critical challenges \cite{wang2022when, wang2019deep}. 
Standard PINNs often fail to preserve asymptotic consistency in singularly perturbed regimes, such as those encountered as the Knudsen number approaches zero, resulting in inaccurate macroscopic predictions. 
To address these limitations, \cite{jin2023asymptotic,jin2024ap,liu2025asymptotic,jin2022asymptotic,bertaglia2022asymptotic} have proposed modifying the loss function to incorporate asymptotic-preserving (AP) properties during training. Error estimates for such approaches have been recently obtained in \cite{wan2025error}. We refer also to \cite{Frank25, Frank23, Frank21} for related works based on the application of NN to kinetic equations.

The key contributions of this work are summarized as follows:
\begin{itemize}
    \item We extend the previously developed multiscale control variate strategy (\allowbreak MSCV) \cite{dimarco2019multi,dimarco2020multiscale}, originally designed to significantly accelerate the slow convergence of standard Monte Carlo methods for UQ, to neural network-based models. 
    Compared to traditional methods, this enhanced method achieves higher efficiency in predicting large numbers of samples.
    
    \item For the homogeneous BGK equation, we propose a neural network framework based on micro-macro decomposition and conservation constraints, which preserves physical positivity and asymptotic-preserving properties, thereby enabling stable long-time simulations. 
    By correcting the Knudsen number in the BGK model and incorporating a suitable amount of Boltzmann data for supervised training, the network predictions are brought closer to the numerical solutions of the Boltzmann equation, which plays an important role in reducing the error due to uncertainty.
    
    \item Building on the theoretical framework developed in \cite{jin2024ap,liu2024discontinuity}, we extend the traditional single-sample training paradigm by introducing parameter uncertainty. 
    This yields a neural network with strong generalization capabilities, enabling the effective prediction of solutions to initial value problems under random parameters.
\end{itemize}

The remainder of this paper is structured as follows.
Section \ref{sec2} introduces fundamental concepts of MC sampling in UQ.
Section \ref{sec3} reviews the MSCV methods proposed in \cite{dimarco2019multi,dimarco2020multiscale}.
Section \ref{sec4} presents a structure- and asymptotic preserving neural network tailored for the homogeneous BGK model and subsequently extended to non homogeneous settings.
To enhanced computational efficiency for large-scale sample predictions, Section \ref{sec4} further extends the conventional single-sample training paradigm by incorporating parameter uncertainty.
Section \ref{sec5} provides several numerical experiments for the Boltzmann equation, demonstrating the superior performance of the SAPNN-based methodology. Finally, concluding remarks are provided in Section \ref{sec6}.

\section{Standard Monte Carlo method}
\label{sec2}
Before going into the details of the presentation, let us define some notations and assumptions that will be used subsequently. If \(z \in \Omega_z\) follows the distribution \(p(z)\), the expected value is denoted by
\begin{equation*}
  \mathbb{E}[f](x,v,t) = \int_{\Omega_z} f(z, x, v, t) p(z) ~{\rm d}z,
\end{equation*}
and its variance can be written as
\begin{equation*}
  \Var(f)(x,v,t) = \int_{\Omega_z} \left(f(z, x, v, t) - \mathbb{E}[f](x,v,t)\right)^2 p(z) ~{\rm d}z.
\end{equation*}
We introduce an \(L^p\)-norm with polynomial weight as referenced in \cite{perthame1993weighted}:
\begin{equation*}
  \|f(z, \cdot, t)\|_{L_s^p(\mathbb{R}^{d_x} \times \mathbb{R}^{d_v})}^p = \int_{\mathbb{R}^{d_x} \times \mathbb{R}^{d_v}} |f(z, x, v, t)|^p (1 + |v|)^s ~{\rm d}v {\rm d}x.
\end{equation*}
For a random variable \(Z\) that takes values in \(L_s^p(\mathbb{R}^{d_x} \times \mathbb{R}^{d_v})\), we define
\begin{equation}
\label{eq1}
  \|Z\|_{L_s^p(\mathbb{R}^{d_x} \times \mathbb{R}^{d_v}; L^2(\Omega))} = \|\mathbb{E}[Z^2]^{1/2}\|_{L_s^p(\mathbb{R}^{d_x} \times \mathbb{R}^{d_v})}.
\end{equation}
The aforementioned norm, if \(p \neq 2\), differs from
\begin{equation}
\label{eq2}
  \|Z\|_{L^2(\Omega; L_s^p(\mathbb{R}^{d_x} \times \mathbb{R}^{d_v}))} = \mathbb{E} \left[ \left\|Z\right\|_{L_s^p(\mathbb{R}^{d_x} \times \mathbb{R}^{d_v})}^2 \right]^{1/2},
\end{equation}
which is typically employed in \cite{mishra2017monte,mishra2012multi}. By the Jensen inequality, it is established that
\begin{equation*}
  \|Z\|_{L_s^p(\mathbb{R}^{d_x} \times \mathbb{R}^{d_v}; L^2(\Omega))} \leq \|Z\|_{L^2(\Omega; L_s^p(\mathbb{R}^{d_x} \times \mathbb{R}^{d_v}))}.
\end{equation*}
For simplicity, we adopt norm \eqref{eq1} for \(p = 1\). This principle holds for \(p = 2\) as well, where the two norms coincide, whereas their extension to norm \eqref{eq2} for \(p = 1\) generally necessitates that \(Z\) be compactly supported \cite{dimarco2019multi,mishra2012multi}.

The standard Monte Carlo (MC) framework for UQ consists of three principal components:
\begin{enumerate}[leftmargin=*, label=\textit{Step \arabic*.}, series=mc]
    \item \textit{Stochastic Initialization}:
    Generate $L$ independent and identically distributed (i.i.d.) initial states $\{f_k^0\}_{k=1}^L$ from the random initial data $f^0$ and approximate these over the grid.
    
    \item \textit{Deterministic Evolution}:
    For each sample $f_k^0$, numerically solve the kinetic equation using appropriate discretization methods. Let $\mathcal{F}^n$ denote the numerical solution operator with time step $\Delta t^n$ satisfying CFL condition. Then the discrete-time solution satisfies:
    \begin{equation}
        f_{k}^{n} = \mathcal{F}^n(f_k^{n - 1}), \quad n = 0,1,...,N
    \end{equation}
    where the spatial and velocity discretizations are implicit in $\mathcal{F}^n$.
    
    \item \textit{Statistical Postprocessing}:
    Compute empirical estimates of solution statistics. The mean field approximation at $t^n$ is given by:
    \begin{equation}
    \label{MC}
        \E[f^n] \approx E_L[f^n] := \frac{1}{L}\sum_{k=1}^L f_{k}^n.
    \end{equation}
\end{enumerate}
The MC method demonstrates two fundamental properties that justify its prevalence in UQ. 
First, its \emph{non-intrusive} nature decouples the stochastic sampling process from the deterministic solver implementation, enabling modular integration with existing numerical schemes. 
Second, the method achieves \emph{dimension-independent convergence} with rate $O(L^{-1/2})$ \cite{dimarco2019multi,caflisch1998monte,mishra2012multi,mishra2017monte}, where $L$ denotes the sample size, irrespective of the stochastic dimension $\dim(\Omega_z)$. 
This property stems from the measure-theoretic foundation of the approach rather than geometric discretization of the parameter space. 
These characteristics collectively ensure universal applicability to broad classes of kinetic equations.

Assume that the deterministic scheme $\mathcal{F}^n$ possesses order $p$ and $q$ accuracy in the spatial and velocity directions, respectively, and that the set of random initial state $f(z, x, v, 0) = f^0(z, x, v)$ is sufficiently regular. Then, the Monte Carlo estimator defined in \eqref{MC} satisfies the following error bound \cite{dimarco2019multi}:
\begin{equation*}
  \Vert \E[f^n] - E_L[f^n] \Vert_{L_2^1(\mathbb{R}^{d_x} \times \mathbb{R}^{d_v}; L^2(\Omega))} \leq C \left(\sigma_f L^{-1/2} + \Delta x^{p}+ \Delta v^{q} \right).
\end{equation*}
Here, $\sigma_f = \Vert \Var(f)^{1/2} \Vert_{L_2^1(\mathbb{R}^{d_x} \times \mathbb{R}^{d_v})}$ and the constant $C = C(T, f^0)$ depends on the final time $T$ and initial data $f^0$.
Clearly, the sampling error in the discretization and a priori estimation can be balanced by choosing $L = \mathcal{O}(\Delta x ^{-2p})$ and $\Delta x = \mathcal{O}(\Delta v^{q/p})$.
This implies that a very large number of samples is required to achieve comparable errors, particularly when using high order deterministic solvers. 
Consequently, Monte Carlo methods may become prohibitively expensive in practical applications.

\section{Variance reduction Monte Carlo methods}
\label{sec3}
To enhance the computational efficiency of MC methods, we turn to the MSCV methods.
In this study, we employ the Boltzmann equation \eqref{kinetic}-\eqref{eq:coll} as a high-fidelity (HF) reference solution, retaining the complete collision integral term to ensure high accuracy. 
By contrast, the BGK model \eqref{kinetic} with \eqref{eq:BGK} serves as a low-fidelity (LF) solver, trading certain physical details of the collision term for enhanced computational efficiency. 
For nonhomogeneous flows, we additionally incorporate the Euler equations as a secondary LF solver to address macroscopic-scale simplifications.

\subsection{Multiscale control variate methods}
Let $\mathbb{E}[f](x, v, t)$ denote the expected solution field of the kinetic equation and set $L$ ($L \gg K$) and $K$ denote the numbers of i.i.d. samples $f_k(x,v,t)$ generated by the LF solver 
$\left(f^{\rm LF}_k\right)$ and HF solver $\left(f^{\rm HF}_k\right)$, respectively. Note that the solution of both solvers depends on the scale $\varepsilon$ defined by the Knudsen number. Moreover, both models as $\varepsilon\to 0$ formally converge towards the solution of the Euler system \eqref{Euler}.

The MSCV estimator combines both solvers as follows:
\begin{equation}
\label{estimator}
    \E[f](x,v,t) \approx E_{K, L}^{\lambda}[f](x,v,t) := \frac{1}{K}\sum_{k=1}^K f^{\rm HF}_k - \lambda\left(\frac{1}{K}\sum_{k=1}^K f^{\rm LF}_k - \frac{1}{L}\sum_{k=1}^L f^{\rm LF}_k\right),
\end{equation}
where $\lambda \in \mathbb{R}$ is a control parameter. 
As described in \cite{dimarco2019multi}, the  above control variate estimator has the following property:
\begin{lemma}
  The control variate estimator \eqref{estimator} is unbiased and consistent for any choice of $\lambda \in \mathbb{R}$. In partiular, the MSCV will reduce to MC for HF solver with $\lambda = 0$, and for $\lambda = 1$ we get
  \begin{equation*}
    E_{K, L}^{1}[f](x,v,t) = \frac{1}{L}\sum_{k=1}^L f^{\rm LF}_k + \frac{1}{K}\sum_{k=1}^K \left(f^{\rm HF}_k - f^{\rm LF}_k\right).
  \end{equation*}
\end{lemma}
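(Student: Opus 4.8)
The plan is to read off both properties directly from the definition \eqref{estimator}, using nothing beyond linearity of the expectation and the i.i.d.\ structure of the sample families, and then to obtain the two special values of $\lambda$ by elementary algebra. For unbiasedness, I would apply $\E[\cdot]$ (the average over the random parameter $z$, equivalently over the sampling of the initial data) to \eqref{estimator}. Since the $f^{\rm HF}_k$ are identically distributed copies of $f^{\rm HF}$, linearity gives $\E\big[\frac1K\sum_{k=1}^K f^{\rm HF}_k\big]=\E[f^{\rm HF}]$; the same argument applied to the two low-fidelity averages yields $\E\big[\frac1K\sum_{k=1}^K f^{\rm LF}_k\big]=\E\big[\frac1L\sum_{k=1}^L f^{\rm LF}_k\big]=\E[f^{\rm LF}]$. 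Hence the bracketed control term has mean zero for every $\lambda$, so $\E\big[E_{K,L}^{\lambda}[f]\big]=\E[f^{\rm HF}]=\E[f]$, independently of $\lambda$ --- the high-fidelity (Boltzmann) solver being taken as the reference field, the choice of control variate affects only the variance, never the mean.

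For consistency I would let $K,L\to\infty$ and invoke the strong law of large numbers, applied pointwise in $(x,v,t)$ --- or, if the normed formulation of Section~\ref{sec2} is wanted, in the weighted norm \eqref{eq1}, upgrading a.s.\ convergence to convergence in that norm by dominated convergence under the integrability assumptions already in force. Then $\frac1K\sum_{k=1}^K f^{\rm HF}_k\to\E[f^{\rm HF}]=\E[f]$, while $\frac1K\sum_{k=1}^K f^{\rm LF}_k-\frac1L\sum_{k=1}^L f^{\rm LF}_k\to\E[f^{\rm LF}]-\E[f^{\rm LF}]=0$; combining the two limits, $E_{K,L}^{\lambda}[f]\to\E[f]$ for every fixed $\lambda$, which is the claimed consistency.

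The two special cases are then immediate. Taking $\lambda=0$ in \eqref{estimator} annihilates the control term and leaves $E_{K,L}^{0}[f]=\frac1K\sum_{k=1}^K f^{\rm HF}_k$, i.e.\ precisely the standard MC estimator \eqref{MC} built on the HF solver. Taking $\lambda=1$ and regrouping the three sums,
\begin{equation*}
\begin{aligned}
  E_{K,L}^{1}[f] &= \frac1K\sum_{k=1}^K f^{\rm HF}_k - \frac1K\sum_{k=1}^K f^{\rm LF}_k + \frac1L\sum_{k=1}^L f^{\rm LF}_k\\
  &= \frac1L\sum_{k=1}^L f^{\rm LF}_k + \frac1K\sum_{k=1}^K\big(f^{\rm HF}_k-f^{\rm LF}_k\big),
\end{aligned}
\end{equation*}
which is the asserted identity.

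There is no genuine analytic obstacle: the content is essentially linearity of expectation together with the law of large numbers. The only points that need care are of a bookkeeping nature --- making precise the topology in which ``consistent'' is meant (pointwise versus in the weighted $L^2(\Omega)$-type norms of Section~\ref{sec2}, the latter requiring the integrability already assumed), and being explicit about the sampling design, in particular that the $K$ HF runs use the same parameter draws as $K$ of the $L$ LF runs. This coupling is irrelevant for the present lemma --- it enters only the variance, hence the later optimal choice of $\lambda$ --- but it should be recorded so that \eqref{estimator} is unambiguous; one should also note, as already implicit in the a priori bound of Section~\ref{sec2}, that identifying $\E[f^{\rm HF}]$ with $\E[f]$ tacitly sets aside the spatial and velocity discretization error of the HF solver.
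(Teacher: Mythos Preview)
Your argument is correct. The paper itself does not supply a proof of this lemma: it merely states the result and attributes it to \cite{dimarco2019multi}. Your proposal therefore fills in details that the paper omits, and does so soundly --- unbiasedness via linearity of expectation and the identical-distribution property of the samples, consistency via the strong law of large numbers, and the two special cases by direct substitution. Your closing remarks on the sampling design (that the $K$ high-fidelity draws are coupled to $K$ of the $L$ low-fidelity draws) and on the meaning of ``consistency'' in the weighted-norm sense are appropriate caveats, though as you note they are not needed for the lemma as stated.
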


Before incorporating the random variable, we assume that $L$ is sufficiently large and define
\begin{equation*}
  \mathbb{E}[f^{\rm LF}] := \lim_{L \to +\infty} \frac{1}{L}\sum_{k=1}^L f^{\rm LF}_k.
\end{equation*}
Under this assumption, the estimator in \eqref{estimator} becomes
\begin{equation}
\label{estimator2}
    \E[f](x,v,t) \approx E_{K}^{\lambda}[f](x,v,t) = \frac{1}{K}\sum_{k=1}^K f^{\rm HF}_k - \lambda\left(\frac{1}{K}\sum_{k=1}^K f^{\rm LF}_k - \mathbb{E}[f^{\rm LF}]\right).
\end{equation}
Let us consider the random variable
\begin{equation}
\label{flambda}
    f^{\lambda}(z,x,v,t) = f^{\rm HF}(z,x,v,t) - \lambda\left(f^{\rm LF}(z,x,v,t) - \mathbb{E}[f^{\rm LF}](\cdot,x,v,t)\right), 
\end{equation}
and we can quantify its variance at the point $(x, v, t)$ as
\begin{equation}
\label{vf}
  \Var(f^{\lambda}) = \Var \left( f^{\rm HF} \right) + \lambda^2 \Var \left( f^{\rm LF} \right) - 2\lambda \Cov \left( f^{\rm HF}, f^{\rm LF} \right),
\end{equation}
where $\Cov(\cdot, \cdot)$ is the covariance matrix.
We state the following theorem and provide its proof for completeness \cite{dimarco2019multi}.
\begin{lemma}
If $\Var\left( f^{\rm LF} \right) \neq 0$, the quantity
\begin{equation}
\label{lambda}
  \tilde{\lambda} = \frac{\Cov \left( f^{\rm HF}, f^{\rm LF} \right)}{\Var\left( f^{\rm LF} \right)}
\end{equation}
minimizes the variance of $f^\lambda$ at the point $(x, v, t)$ and gives
\begin{equation}
\label{vft}
  \Var \left(f^{\tilde{\lambda}}\right) = \left(1 - \tilde{\rho}^2\right) \Var \left( f^{\rm HF} \right),
\end{equation}
where $\tilde{\rho} \in [-1, 1]$ is the correlation coefficient between $f^{\rm HF}$ and $f^{\rm LF}$. 
In addition, we have
\begin{equation}
\label{limit}
  \lim_{\eps \to 0} \tilde{\lambda} (x, v, t) = 1, \quad \lim_{\eps \to 0} \Var \left( f^{\tilde{\lambda}} \right) (x, v, t) = 0, \quad \forall (x, v) \in \mathbb{R}^{d_x} \times \mathbb{R}^{d_v}.
\end{equation}
\end{lemma}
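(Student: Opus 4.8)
The plan is to handle the two assertions in turn. For the minimality, I would view $\Var(f^\lambda)$, as given by \eqref{vf}, as a scalar quadratic in $\lambda$,
\begin{equation*}
  g(\lambda) := \Var(f^{\rm HF}) + \lambda^2\,\Var(f^{\rm LF}) - 2\lambda\,\Cov(f^{\rm HF},f^{\rm LF}),
\end{equation*}
whose leading coefficient $\Var(f^{\rm LF})$ is strictly positive by hypothesis. Hence $g$ is strictly convex and has a unique minimizer at the critical point of $g'(\lambda) = 2\lambda\,\Var(f^{\rm LF}) - 2\,\Cov(f^{\rm HF},f^{\rm LF}) = 0$, namely $\tilde\lambda$ of \eqref{lambda}. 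Plugging $\lambda = \tilde\lambda$ back in and simplifying gives $g(\tilde\lambda) = \Var(f^{\rm HF}) - \Cov(f^{\rm HF},f^{\rm LF})^2/\Var(f^{\rm LF})$; since the correlation coefficient obeys $\tilde\rho^2 = \Cov(f^{\rm HF},f^{\rm LF})^2/(\Var(f^{\rm HF})\Var(f^{\rm LF}))$, this is exactly \eqref{vft}. That $\tilde\rho \in [-1,1]$ is the Cauchy--Schwarz inequality for the covariance, which in addition guarantees $g(\tilde\lambda) \ge 0$ as required.

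For the asymptotic statement I would invoke the fact recalled right before the lemma: as $\eps \to 0$ both the HF (Boltzmann) and LF (BGK) solvers relax to the local Maxwellian of the same limiting Euler system \eqref{Euler}, so the residual $r_\eps := f^{\rm HF} - f^{\rm LF}$ vanishes at each fixed $(x,v,t)$, in the (formal) sense in which that limit is understood. Writing $f^{\rm HF} = f^{\rm LF} + r_\eps$ and expanding, $\Cov(f^{\rm HF},f^{\rm LF}) = \Var(f^{\rm LF}) + \Cov(r_\eps,f^{\rm LF})$, so by Cauchy--Schwarz
\begin{equation*}
  |\tilde\lambda - 1| = \frac{|\Cov(r_\eps,f^{\rm LF})|}{\Var(f^{\rm LF})} \le \frac{\Var(r_\eps)^{1/2}}{\Var(f^{\rm LF})^{1/2}} \longrightarrow 0 \quad \text{as } \eps \to 0,
\end{equation*}
provided the limiting variance $\lim_{\eps\to 0}\Var(f^{\rm LF})$ (that of the limiting Euler state) is nonzero; this proves $\tilde\lambda \to 1$. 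For the variance I would avoid passing to the limit in the $0/0$-type ratio and instead use the minimality just established together with a squeeze: $0 \le \Var(f^{\tilde\lambda}) \le \Var(f^{1}) = \Var\bigl((f^{\rm HF}-f^{\rm LF}) + \E[f^{\rm LF}]\bigr) = \Var(r_\eps) \to 0$, using that $\E[f^{\rm LF}]$ is deterministic.

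The first assertion is entirely routine --- the minimization of a one-variable quadratic plus Cauchy--Schwarz. The only delicate point is the limit $\eps \to 0$: it hinges on the convergence $f^{\rm HF} - f^{\rm LF} \to 0$, which is available here only at the formal level at which the shared compressible Euler limit is known, so the argument should be presented as inheriting precisely that level of rigor. One should also flag the mild nondegeneracy $\lim_{\eps\to 0}\Var(f^{\rm LF}) \neq 0$ needed for $\tilde\lambda \to 1$; note that the squeeze argument makes $\Var(f^{\tilde\lambda}) \to 0$ hold without any such assumption.
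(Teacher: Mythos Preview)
Your argument is correct and, for the minimization part, is essentially the paper's proof: differentiate the quadratic \eqref{vf}, use positivity of the second derivative (your convexity), substitute back, and identify $\tilde\rho$. For the asymptotic limit the paper is more terse --- it simply asserts that as $\eps\to 0$ both models converge to the Euler solution so that $f^{\rm HF}=f^{\rm LF}$, and then substitutes directly into \eqref{lambda} and \eqref{vft}. Your treatment via the residual $r_\eps$, the Cauchy--Schwarz bound on $|\tilde\lambda-1|$, and the squeeze $0\le \Var(f^{\tilde\lambda})\le \Var(f^{1})=\Var(r_\eps)\to 0$ is a slightly more careful route to the same conclusion; the squeeze in particular is a nice touch, since it delivers $\Var(f^{\tilde\lambda})\to 0$ without appealing to the ratio in \eqref{vft} or requiring $\Var(f^{\rm HF})$ to stay bounded away from zero.
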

\begin{proof}
Equation \eqref{lambda} is derived by differentiating \eqref{vf} with respect to $\lambda$, where $\tilde{\lambda}$ emerges as the unique stationary point. 
The positivity of the second derivative, $2 \Var\left({f^{\rm LF}}\right) > 0$, confirms that $\tilde{\lambda}$ corresponds to a minimum. 
Substituting this optimal $\tilde{\lambda}$ back into \eqref{vf} yields \eqref{vft}, with the coefficients satisfying
\begin{equation*}
  \tilde{\rho} = \frac{\Cov \left( f^{\rm HF}, f^{\rm LF} \right)}{\sqrt{\Var\left( f^{\rm LF} \right) \Var\left( f^{\rm HF} \right)}}.
\end{equation*}
As the Knudsen number $\eps$ tends to zero, the collision term $Q(f, f)$ vanishes, and both the Boltzmann and BGK models converge to the Euler equations.
This implies $f^{\rm HF} = f^{\rm LF}$, and by applying equations \eqref{lambda} and \eqref{vft}, we obtain the limiting expression in \eqref{limit}.
\end{proof}

In practice, we can use the following formulas to compute the optimized control parameter, variance, and covariance
\begin{equation}
\label{lambda1}
\begin{aligned}
  \tilde{\lambda}_K &= \frac{\Cov_K \left( f^{\rm HF}, f^{\rm LF} \right)}{\Var_K \left( f^{\rm LF} \right)}, \\
  \Var_K(f^{\rm LF}) &= \frac{1}{K - 1}\sum_{k = 1}^K \left( f^{\rm LF}_k - \frac{1}{K} \sum_{k = 1}^K \left( f^{\rm LF}_k \right)\right)^2, \\
  \Cov_K(f^{\rm HF}, f^{\rm LF}) &= \frac{1}{K - 1}\sum_{k = 1}^K \left( f^{\rm LF}_k - \frac{1}{K} \sum_{k = 1}^K \left( f^{\rm LF}_k \right)\right) \left( f^{\rm HF}_k - \frac{1}{K} \sum_{k = 1}^K \left( f^{\rm HF}_k \right)\right).
\end{aligned}
\end{equation}
For the general case \eqref{estimator}, the total variance can be written as
\begin{equation*}
\begin{aligned}
  \Var\left( E_{K,L}^{\lambda} [f]\right) &= K^{-1}\Var\left(f^{HF} - \lambda f^{LF}\right) + L^{-1}\Var\left(\lambda f^{LF}\right) \\
  &= K^{-1}\left(\Var\left(f^{HF}\right) - 2\lambda \Cov\left(f^{LF}, f^{HF}\right)\right) \!+\! \left(K^{-1} + L^{-1}\right)\lambda^2\Var\left(f^{LF}\right)
\end{aligned}
\end{equation*}
where the central limit theorem has been used in the first line. Minimizing the above quantity with respect to $\lambda$ yields the optimal value
\begin{equation}
\label{lambda2}
  \lambda_* = \frac{L}{K + L} \tilde{\lambda}_K
\end{equation}
where $\tilde{\lambda}_K$ is given by \eqref{lambda1}.

Using the optimal value \eqref{lambda} and a deterministic solver with spatial and velocity accuracies of order $p$ and $q$, respectively, we derive the following error estimate \cite{dimarco2019multi,dimarco2020multiscale,mishra2012multi}:
\begin{equation*}
  \Vert \E[f^n] - E_{K, L}^{\tilde{\lambda}}[f^n] \Vert_{L_2^1(\mathbb{R}^{d_x} \times \mathbb{R}^{d_v}; L^2(\Omega))} \leq C \left(\sigma_{f^{\tilde{\lambda}}} K^{-1/2} + \tau_{f^{\tilde{\lambda}}} L^{-1/2} + \Delta x^{p}+ \Delta v^{q} \right),
\end{equation*}
where $C > 0$ is a constant depending on the final time and the initial data, $\sigma_{f^{\tilde{\lambda}}} = \Vert \left(1 - \tilde{\rho}^2\right)^{1/2} \Var \left( f^{\rm HF} \right)^{1/2} \Vert_{L_2^1(\mathbb{R}^{d_x} \times \mathbb{R}^{d_v})}$, and $\tau_{f^{\tilde{\lambda}}} = \Vert\tilde{\rho} \Var \left( f^{\rm HF} \right)^{1/2} \Vert_{L_2^1(\mathbb{R}^{d_x} \times \mathbb{R}^{d_v})}$.
As $\eps \to 0$, we have $\tilde{\rho} \to 1$; therefore, in the fluid limit, the statistical error reduces to that of the fine-scale control variate model.

\subsection{Multiple multiscale control variates}
Now, we extend the methodology to the use of several different fidelity solutions as control variates with the aim of further improving the variance reduction properties of MSCV methods. For consistency and clarity, we denote the algorithms that are next in fidelity to the highest fidelity solver as \( \text{LF}_i \), where \( i = 1, \ldots, I \). The multiple MSCV \cite{dimarco2020multiscale} can be written as
\begin{equation}
\label{mpestimator}
    \E[f](x,v,t) \approx E_{K}^{\Lambda}[f](x,v,t) = \frac{1}{K}\sum_{k=1}^{K} f^{\rm HF}_k - \sum_{i = 1}^I \lambda_i  \left(\frac{1}{K}\sum_{k=1}^{K} f^{{\rm LF}_i}_k - \mathbb{E}[f^{{\rm LF}_i}]\right),
\end{equation}
with 
\begin{equation*}
  \mathbb{E}[f^{{\rm LF}_i}] := \lim_{L \to +\infty} \frac{1}{L}\sum_{k=1}^{L} f^{{\rm LF}_i}_k,
\end{equation*}
and optimized control parameters $\lambda_i \in \mathbb{R}$. Similar as \eqref{flambda}, we consider the random variable
\begin{equation}
 \label{fLambda}
    f^{\Lambda}(z,x,v,t) = f^{\rm HF}(z,x,v,t) - \sum_{i = 1}^I\lambda_i \left(f^{{\rm LF}_i}(z,x,v,t) - \mathbb{E}[f^{{\rm LF}_i}]\right), 
\end{equation}
with the definition
\begin{equation}
\label{notation}
\begin{aligned}
  &\Lambda = (\lambda_1, \ldots, \lambda_L)^{\rm T}, \quad b = (b_i), \quad C = (c_{ij}), \\
  &b_i =  \Cov{(f^{\rm HF}, f^{{\rm LF}_i})}, \quad c_{ij} = \Cov{(f^{{\rm LF}_i}, f^{{\rm LF}_j})},
\end{aligned}
\end{equation}
where $C$ is the symmetric $I\times I$ covariance matrix.
Based on \eqref{fLambda} and \eqref{notation}, we can derive the variance of the random variable
\begin{equation}
\label{varfL}
  \Var{(f^{\Lambda})} = \Var{(f^{\rm HF})} + \Lambda^{\rm T} C\Lambda - 2\Lambda^{\rm T}b.
\end{equation}
For completeness, we give the following lemma \cite{dimarco2020multiscale}:
\begin{lemma}
  Assuming the covariance matrix is not singular, the vector
  \begin{equation}
  \label{sLambda}
    \tilde{\Lambda} = C^{-1}b
  \end{equation}
  minimizes the variance of $f^{\Lambda}$ at the point $(x, v, t)$ and gives
  \begin{equation}
  \label{varfLt}
    \Var{(f^{\tilde{\Lambda}})} = \left(1 - \frac{b^{\rm T}(C^{-1})^{\rm T}b}{\Var{(f^{\rm HF})}} \right) \Var{(f^{\rm HF})}.
  \end{equation}
\end{lemma}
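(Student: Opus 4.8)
The plan is to mimic exactly the scalar argument used in the proof of the previous lemma, now carried out with vectors and matrices. The variance \eqref{varfL} is a quadratic form in $\Lambda$, namely $\Var(f^{\Lambda}) = \Var(f^{\rm HF}) + \Lambda^{\rm T} C\Lambda - 2\Lambda^{\rm T}b$, and since $C$ is a covariance matrix it is symmetric positive semidefinite; the nonsingularity assumption upgrades this to positive definite. First I would differentiate the quadratic form with respect to $\Lambda$, using $\nabla_\Lambda(\Lambda^{\rm T}C\Lambda) = 2C\Lambda$ (by symmetry of $C$) and $\nabla_\Lambda(\Lambda^{\rm T}b) = b$, to obtain the stationarity condition $2C\Lambda - 2b = 0$, i.e. $C\Lambda = b$. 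Since $C$ is invertible this has the unique solution $\tilde\Lambda = C^{-1}b$, which is \eqref{sLambda}.

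Next I would confirm this stationary point is the global minimum rather than a saddle or maximum: the Hessian of $\Var(f^{\Lambda})$ is the constant matrix $2C$, which is positive definite under the stated assumption, so the quadratic is strictly convex and $\tilde\Lambda$ is its unique global minimizer. Then I would substitute $\tilde\Lambda = C^{-1}b$ back into \eqref{varfL}. Using $\tilde\Lambda^{\rm T}C\tilde\Lambda = (C^{-1}b)^{\rm T}C(C^{-1}b) = b^{\rm T}(C^{-1})^{\rm T}b$ (where I have used $C^{\rm T}=C$ so $C^{-1}$ is also symmetric, and $C^{-1}CC^{-1}=C^{-1}$) and $\tilde\Lambda^{\rm T}b = b^{\rm T}(C^{-1})^{\rm T}b$ likewise, the cross term and the quadratic term combine to give $\Var(f^{\rm HF}) + b^{\rm T}(C^{-1})^{\rm T}b - 2b^{\rm T}(C^{-1})^{\rm T}b = \Var(f^{\rm HF}) - b^{\rm T}(C^{-1})^{\rm T}b$. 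Factoring out $\Var(f^{\rm HF})$ yields \eqref{varfLt}.

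There is no serious obstacle here — this is the standard least-squares / generalized control-variate computation, and the only subtlety worth stating carefully is the reduction from positive semidefinite to positive definite, which is precisely what the nonsingularity hypothesis on $C$ supplies (one could also note that as the number $I$ of low-fidelity controls is typically small, invertibility of the empirical covariance matrix holds generically when $K > I$). I would also remark in passing, to parallel \eqref{limit} in the previous lemma, that as $\eps \to 0$ all fidelities collapse to the Euler solution, so $b$ and each column of $C$ tend to $\Var(f^{\rm HF})\mathbf{1}$-type quantities and $\Var(f^{\tilde\Lambda}) \to 0$, though this is not strictly required by the statement as written.
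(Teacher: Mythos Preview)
Your proposal is correct and follows essentially the same approach as the paper: set the gradient of the quadratic form \eqref{varfL} to zero to obtain $C\Lambda = b$, invoke nonsingularity of $C$ for uniqueness and positive definiteness (second-order condition) for minimality, and substitute $\tilde\Lambda = C^{-1}b$ back to obtain \eqref{varfLt}. The only cosmetic difference is that the paper writes the first-order conditions component-wise before passing to matrix form, whereas you differentiate the quadratic form directly in vector notation; your substitution step is also spelled out in slightly more detail than the paper's one-line ``substituting $\tilde\Lambda$ into equation \eqref{varfL} directly yields \eqref{varfLt}.''
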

\begin{proof}
To find the optimal values $\tilde{\lambda}_i$ for \(i = 1, \ldots, I\), we first impose the first-order optimality conditions by setting the partial derivatives of $\Var\left(f^\Lambda\right)$ with respect to $\lambda_i$ to zero:
\begin{equation*}
  \frac{\partial}{\partial \lambda_i} \Var (f^\Lambda) = 0, \quad i = 1, \ldots, I.
\end{equation*}
This results in the linear system:
\begin{equation*}
  \Cov{(f^{\rm HF}, f^{{\rm LF}_i})} = \sum_{k=1}^I \lambda_k \Cov{(f^{{\rm LF}_i}, f^{{\rm LF}_k})}, \quad i = 1, \ldots, I.
\end{equation*}
In vector-matrix form, this becomes:
\begin{equation*}
  b = C {\Lambda}.
\end{equation*}
If $C$ is nonsingular, the solution \eqref{sLambda} is uniquely determined. 
The second-order conditions confirm that $\tilde{\Lambda}$ minimizes the variance. 
Finally, substituting $\tilde{\Lambda}$ into equation \eqref{varfL} directly yields \eqref{varfLt}.
\end{proof}

To illustrate the methodology, we consider the homogeneous case with $I = 2$, where $f^{{\rm LF}_1} = f^0$, the initial data, and $f^{{\rm LF}_2} = M$, the stationary state.
A straightforward computation shows that the optimal values $\tilde{\lambda}_1$ and $\tilde{\lambda}_2$ are given by
\begin{equation}
\begin{aligned}
\label{lambda12}
  \tilde{\lambda}_1 = \frac{\Var ( M) \Cov ( f^{\rm HF}, f^0) - \Cov ( f^0, M) \Cov ( f^{\rm HF}, M)}{\Delta}, \\
  \tilde{\lambda}_2 = \frac{\Var ( f^0) \Cov ( f^{\rm HF}, M) - \Cov ( f^0, M) \Cov ( f^{\rm HF}, f^0)}{\Delta},
\end{aligned}
\end{equation}
with $\Delta = \Var ( f^0) \Var ( M) - \Cov ( f^0, M)^2 \neq 0$.

Using $K$ samples for both control variates, the optimal estimator takes the form
\begin{equation*}
\begin{aligned}
    E_{K}^{\tilde{\lambda}_1, \tilde{\lambda}_2}[f](v,t) = \frac{1}{K}\sum_{k=1}^{K} f^{\rm HF}_k(v,t) &- \tilde{\lambda}_1  \left(\frac{1}{K}\sum_{k=1}^{K} f^0_k(v) - \mathbb{E}[f^0](v)\right) \\
    &- \tilde{\lambda}_2  \left(\frac{1}{K}\sum_{k=1}^{K} M_k(v) - \mathbb{E}[M](v)\right).
\end{aligned}
\end{equation*}
At time $t = 0$, since $f^{\rm HF}(z,v,0) = f^0(z,v)$, we immediately obtain $\tilde{\lambda}_1 = 1$ and $\tilde{\lambda}_2 = 0$.
Thus, the estimator reduces to the exact expression $E_{K}^{1,0}[f](v,0) = \mathbb{E}[f^0](v)$.
As $f^{\rm HF}(z,v,t) \to M(z,v)$ for $t \to \infty$, the asymptotic behavior of the coefficients from \eqref{lambda12} yields
\begin{equation*}
  \lim_{t \to \infty} \tilde{\lambda}_1 = 0, \quad \lim_{t \to \infty} \tilde{\lambda}_2 = 1,
\end{equation*} 
and consequently, the variance of the estimator vanishes in the long-time limit:
\begin{equation*}
  \lim_{t \to \infty} E_{K}^{\tilde{\lambda}_1, \tilde{\lambda}_2}[f](v,t) = E_{K}^{0, 1}[f](v) = \mathbb{E}[M](v).
\end{equation*}

Considering $F = (F_1, \ldots, F_I)^{\rm T}$, such that $F_i = f^{{\rm LF}_i} - \mathbb{E}[f^{{\rm LF}_i}]$, $\mathbb{E} [F_i] = 0$, $i = 1, \ldots, I$, then \eqref{fLambda} can be written as
\begin{equation*}
  f^{\Lambda}(z,x,v,t) = f^{\rm HF}(z,x,v,t) - \sum_{i = 1}^I\lambda_i F_i(z,x,v,t).
\end{equation*}
Therefore, the variance \eqref{varfLt} is reduced to zero if $f$ is in the span of the set of functions  $F_1, \ldots , F_I$.
Using Gram-Schmidt orthogonalization, \eqref{varfL} can be rewritten as
\begin{equation*}
\label{varfG}
  \Var{(f^{\Gamma})} = \Var{(f^{\rm HF})} + \Gamma^{\rm T} D\Gamma - 2\Gamma^{\rm T}e,
\end{equation*}
with orthogonal basis
\begin{equation*}
  g^{{\rm LF}_i} = f^{{\rm LF}_i} - \sum_{j = 1}^{i - 1}\frac{\Cov{(g^{{\rm LF}_j}, f^{{\rm LF}_i})}}{\Var{(g^{{\rm LF}_j})}} g^{{\rm LF}_j}, \quad i = 1,\ldots I.
\end{equation*}
Here, $D$ is the diagonal matrix with elements $d_j = \Var{(g^{{\rm LF}_j})}$, $e$ is the vector with components $e_j = \Cov{(f^{{\rm HF}}, g^{{\rm LF}_j})}$, and $\Gamma = (\gamma_1, \ldots, \gamma_I)^{\rm T}$.
We give the following theorem without proof \cite{dimarco2020multiscale}
\begin{lemma}
  Given the definition $G_k = g_k - \E[g_k]$, and $L^2$ inner product
  \begin{equation*}
    \langle f, g \rangle = \int_{\Omega_z} f(z) g(z) p(z) {\rm d}z,
  \end{equation*}
  if the control variate vector $G = (G_1,\dots,G_I)^{\rm T}$ has orthogonal components, i.e. $\langle G_i, G_j \rangle = 0$ for $i \neq j$, then if $\langle G_i, G_i \rangle \neq 0$, the vector
  \begin{equation*}
    \tilde{\gamma}_k = \frac{\mathrm{Cov}(f^{\rm HF}, g_k)}{\mathrm{Var}(g_k)},\quad {k=1,\dots,I}
  \end{equation*}
  minimizes the variance of $f^{\Gamma}$ at the point $(x, v, t)$ and gives
  \begin{equation*}
    \mathrm{Var}(f^{\tilde{\Gamma}}) = \left(1 \;-\; \sum_{k=1}^I \rho_{f^{\rm HF},\,g_k}^2 \right)\,\mathrm{Var}(f^{\rm HF}),
  \end{equation*}
  where $\rho_{f^{\rm HF}, g_k} \in [-1,1]$ is the correlation coefficient between $f^{\rm HF}$ and $g_k$.
\end{lemma}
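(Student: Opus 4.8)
The plan is to reduce the $I$-dimensional minimization to $I$ decoupled scalar problems, just as in the single-control-variate Lemma, by exploiting that after Gram--Schmidt orthogonalization the relevant covariance matrix is diagonal. First I would write out the variance of $f^{\Gamma} = f^{\rm HF} - \sum_{k=1}^I \gamma_k G_k$, with everything evaluated pointwise in $(x,v,t)$. By bilinearity of the covariance and the fact that the $G_k$ are centered, $\langle G_i, G_j\rangle = \Cov(g_i, g_j)$, so the orthogonality hypothesis $\langle G_i, G_j\rangle = 0$ for $i \neq j$ kills all cross terms and leaves
\begin{equation*}
  \Var(f^{\Gamma}) = \Var(f^{\rm HF}) - 2\sum_{k=1}^I \gamma_k \Cov(f^{\rm HF}, g_k) + \sum_{k=1}^I \gamma_k^2 \Var(g_k).
\end{equation*}

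The key observation is that this is a sum of $I$ independent one-variable quadratics, one in each $\gamma_k$, and can therefore be minimized termwise. Setting $\partial_{\gamma_k}\Var(f^{\Gamma}) = -2\Cov(f^{\rm HF}, g_k) + 2\gamma_k \Var(g_k) = 0$ gives the unique stationary point $\tilde\gamma_k = \Cov(f^{\rm HF}, g_k)/\Var(g_k)$, which is well defined because $\Var(g_k) = \langle G_k, G_k\rangle \neq 0$; the second derivative $2\Var(g_k) > 0$ confirms it is a minimum, and since the quadratic form is separable and strictly convex in each coordinate this stationary point is the global minimizer. This step is structurally identical to the derivation of $\tilde\lambda$ in the single-variate Lemma, applied $I$ times.

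Finally I would substitute $\tilde\Gamma$ back. Each coordinate contributes $-\Cov(f^{\rm HF}, g_k)^2/\Var(g_k)$, so
\begin{equation*}
  \Var(f^{\tilde\Gamma}) = \Var(f^{\rm HF}) - \sum_{k=1}^I \frac{\Cov(f^{\rm HF}, g_k)^2}{\Var(g_k)}.
\end{equation*}
Introducing the correlation coefficient $\rho_{f^{\rm HF}, g_k} = \Cov(f^{\rm HF}, g_k)/\sqrt{\Var(f^{\rm HF})\Var(g_k)}$, each summand equals $\rho_{f^{\rm HF}, g_k}^2 \Var(f^{\rm HF})$, which gives the claimed formula $\Var(f^{\tilde\Gamma}) = (1 - \sum_{k=1}^I \rho_{f^{\rm HF}, g_k}^2)\Var(f^{\rm HF})$; the bound $\rho_{f^{\rm HF}, g_k} \in [-1,1]$ is just the Cauchy--Schwarz inequality for $\langle\cdot,\cdot\rangle$.

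There is essentially no hard step here: the argument is routine once one sees the decoupling. The only point worth care is verifying that orthogonality of the \emph{centered} vectors $G_i$ is exactly what makes the matrix $D$ of the preceding display diagonal --- equivalently, that $\Cov(g_i, g_j) = \langle G_i, G_j\rangle$ since shifting by a constant does not change covariance --- after which the computation is just the $I$-fold repetition of the scalar case. As a consistency check, non-negativity of $\Var(f^{\tilde\Gamma})$ automatically forces $\sum_{k=1}^I \rho_{f^{\rm HF}, g_k}^2 \le 1$, a Bessel-type inequality reflecting that the $g_k$ form an orthogonal system in $L^2(\Omega_z, p)$.
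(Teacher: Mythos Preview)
Your proof is correct. The paper itself does not prove this lemma --- it states it ``without proof'' and refers to \cite{dimarco2020multiscale} --- but your argument is exactly the expected one: orthogonality of the centered $G_k$ makes the covariance matrix $D$ diagonal, the quadratic in $\Gamma$ decouples into $I$ independent scalar minimizations identical to the single-control-variate lemma, and substitution yields the stated variance formula.
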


If we consider a deterministic solver with spatial and velocity accuracies of order $p$ and $q$, respectively, and assuming sufficiently regular initial data, the multiple control variate estimator
\begin{equation*}
    E_{K, L}^{\Gamma}[f](x,v,t) = \frac{1}{K}\sum_{k=1}^{K} f^{\rm HF}_k - \sum_{i = 1}^I \gamma_i  \left(\frac{1}{K}\sum_{k=1}^{K} f^{{\rm LF}_i}_k - \frac{1}{L}\sum_{k=1}^{L} f^{{\rm LF}_i}_k\right),
\end{equation*}
with the optimal values given by \eqref{sLambda}, satisfies the error bound
\begin{equation*}
  \Vert \E[f^n] - E_{K, L}^{\tilde{\Gamma}}[f^n] \Vert_{L_2^1(\mathbb{R}^{d_x} \times \mathbb{R}^{d_v}; L^2(\Omega))} \leq C \left(\sigma_{f^{\tilde{\Gamma}}} K^{-1/2} + \tau_{f^{\tilde{\Gamma}}} L^{-1/2} + \Delta x^{p}+ \Delta v^{q} \right),
\end{equation*}
with a positive constant $C$ depending on the final time and the initial data, $\sigma^2_{f^{\tilde{\Gamma}}} \! =  \! \Vert (1 - \sum_{k=1}^I \rho_{f^{\rm HF},g_k}^2 )\mathrm{Var}(f^{\rm HF}) \Vert_{L_2^1(\mathbb{R}^{d_x} \times \mathbb{R}^{d_v})}$, and $\tau^2_{f^{\tilde{\lambda}}} \!= \! \Vert \sum_{k=1}^I \rho_{f^{\rm HF},g_k}^2 \mathrm{Var}(f^{\rm HF}) \Vert_{L_2^1(\mathbb{R}^{d_x} \times \mathbb{R}^{d_v})}$.

\section{Deep neural surrogates}
\label{sec4}
Traditional deterministic numerical methods for solving kinetic equations often exhibit low computational efficiency, especially in large-scale simulations requiring multiple sample evaluations. To address this issue, we propose utilizing neural networks to learn the properties of kinetic equations, thereby facilitating the efficient generation of multiple samples. Since a trained neural network can swiftly compute solutions for given initial conditions, this approach substantially enhances computational efficiency. In this section, we provide a brief overview of the general framework of physics-informed neural networks (PINNs) \cite{karniadakis2021physics,raissi2019physics}, followed by a discussion of structure and asymptotic preserving neural networks (SAPNNs) in the context of the problems considered in this work.

\subsection{Physics-informed neural networks}
We first introduce the construction of a deep neural network (DNN), which can be characterized by the \textit{triple} $\mathcal{T} = (\mathcal{A}, \mathcal{L}, \mathcal{O})$:
\begin{enumerate}
    \item \textit{Architecture Design} ($\mathcal{A}$): The specification of the network structure, including the computational graph topology, the composition and types of layers, connectivity patterns, activation functions, and the distribution of learnable parameters.

    \item \textit{Objective Functional} ($\mathcal{L}$): The empirical risk minimization problem is formulated as follows: given a training dataset $\mathcal{D}_{\mathrm{train}} = \{(x_i, y_i)\}_{i=1}^N$,
    \[
        \min_{\theta \in \Theta} \mathcal{L}(\theta) := \frac{1}{N}\sum_{i=1}^N \ell(u_\theta(x_i), y_i),
    \]
    where $u_\theta(x)$ denotes the output of the neural network parameterized by $\theta$ for input $x$, and $\ell: \mathcal{Y}\times\mathcal{Y} \to \mathbb{R}_+$ is the loss function measuring prediction fidelity (e.g., mean squared error, cross-entropy).

    \item \textit{Parameter Optimization} ($\mathcal{O}$): The choice of optimization algorithms to solve $\min_\theta \mathcal{L}(\theta)$, typically variants of stochastic gradient descent. The parameter update rule is given by
    \[
        \theta_{k+1} = \theta_k - \eta_k \nabla_\theta \hat{\mathcal{L}}_B(\theta_k),
    \]
    where $\eta_k$ is the (possibly adaptive) learning rate, and the parameter $\hat{\mathcal{L}}_B := \frac{1}{|B|}\sum_{(x_i, y_i) \in B} \ell(u_\theta(x_i), y_i)$ denotes the mini-batch empirical loss over $B \subset \mathcal{D}_{\mathrm{train}}$.
\end{enumerate}
The generalization capability of the network refers to its performance on previously unseen data, which is estimated using a disjoint \textit{test set} $\mathcal{D}_{\mathrm{test}} = \{(x_j, y_j)\}_{j=1}^L$. The empirical test risk is computed as
\[
    \mathcal{L}_{\mathrm{test}}(\theta) \!:=  \!\frac{1}{L} \sum_{j=1}^L \ell(u_\theta(x_j), y_j),
\]
serving as an approximation to the expected (population) risk. This test error is distinct from the empirical risk $\mathcal{L}(\theta)$ evaluated on the training set.

The paradigm shift in PINNs emerges through the systematic incorporation of physical constraints. Consider a PDE system defined on spatio-temporal domain $\Omega \subset \mathbb{R}^{d} \times \mathbb{R}_+$:
\begin{equation*}
\begin{aligned}
    \mathcal{F}[u](x,t; \xi) = 0, \quad & (x,t) \in \Omega  \\
    \mathcal{B}[u](x,t; \xi) = 0, \quad & (x,t) \in \partial\Omega 
\end{aligned}
\end{equation*}
where $\mathcal{F}$ is a differential operator, $\mathcal{B}$ encodes boundary/initial conditions, $u$ represents the solution to the system, and $\xi \in \Xi$ parametrizes the physical system.
The PINN framework approximates the solution $u \approx u_\theta \in \mathcal{U}_{NN}$ through neural parametrization, where $\mathcal{U}_{NN}$ denotes the hypothesis space of DNNs. In PINN literature, the most widely used neural network architecture is the feed-forward neural network.
Setting input data $q^{(0)} \in \mathbb{R}^{d_{\text{in}}} $, the quintessential feed-forward architecture implements:
\[
    \begin{aligned}
        q^{(\ell)} &= \sigma_\ell(W^{(\ell)} q^{(\ell-1)} + b^{(\ell)}), \quad \ell=1,...,L-1 \\
        u_\theta(x) &= W^{(L)} q^{(L-1)} + b^{(L)}
    \end{aligned}
\]
with parameter set $\theta = \{W^{(\ell)}, b^{(\ell)}\}_{\ell=1}^L$, weights $W^{(\ell)} \in \mathbb{R}^{m_\ell \times m_{\ell-1}}$, bias $b^{(\ell)} \in \mathbb{R}^{m_\ell}$, and activation functions $\sigma_\ell: \mathbb{R} \to \mathbb{R}$ applied component-wise.
PINNs reformulate the learning objective by incorporating physical constraints into the loss function:
{\small
\begin{equation*}
\begin{aligned}
    \min_{\theta} \underbrace{\frac{1}{N} \sum_{i=1}^N \|u_\theta(x_i, t_i) \! - \! u_{\mathrm{obs}}(x_i, t_i)\|^2}_{\text{Data loss}}
    + \lambda \left( \! 
        \underbrace{\frac{1}{N_f} \sum_{j=1}^{N_f} \|\mathcal{F}[u_\theta](x_j, t_j)\|^2}_{\text{PDE residual loss}} 
        \! +\!  \underbrace{\frac{1}{N_b} \sum_{k=1}^{N_b} \|\mathcal{B}[u_\theta](x_k, t_k)\|^2}_{\text{Boundary residual loss}} 
    \! \right)
\end{aligned}
\end{equation*}
}
where $u_{\mathrm{obs}}(x_i, t_i)$ denotes observed data at measurement points $(x_i, t_i) \in \mathcal{D}_{\mathrm{data}}$,  and $\lambda > 0$ is a hyperparameter that balances data fidelity and physics enforcement.

\subsection{Structure and asymptotic preserving neural networks}
To rigorously capture multiscale kinetic dynamics across different propagation regimes while maintaining physical consistency, the neural network should satisfy both the asymptotic-preserving (AP) and structure-preserving (SP) properties. 
In the context of kinetic equations, the AP property ensures a unified treatment of multiscale transport phenomena, enabling seamless transitions between hyperbolic-dominated regimes (e.g., free molecular flows) and diffusion-dominated regimes (e.g., near thermodynamic equilibrium).
Simultaneously, the SP property enforces essential invariants -- such as conservation of mass, momentum, and energy, as well as the positivity of distribution functions -- to maintain numerical stability and physical fidelity in long-time simulations.
The coupled framework of SAPNN thus enables the neural network to efficiently and robustly simulate kinetic equations.

\subsubsection{Space homogeneous case: positivity and moments preservation}
For space homogeneous case, the BGK equation \eqref{kinetic} with \eqref{eq:BGK} reduces to
\begin{equation}
  \label{hkinetic}
  \partial_t f = \frac{\mu}{\eps} \left(M[f] - f\right),
\end{equation}
where $f = f(v,t)$ with the initial data $f(v,0) = f_0(v)$, we omit the random variable $z$ for brevity. 
Here, the Maxwellian distribution $M$ is strictly positive and time-independent, as it is uniquely determined by the conserved moments (mass, momentum, and energy) of the initial distribution $f_0$.
Formally, in the limit as the Knudsen number , the solution $f$ converges asymptotically to $M$.

To guarantee {strict positivity} of the kinetic density $f$, we use a structure-informed reparametrization based on the known {steady state} $M$:
\begin{equation}
\label{gf}
f = M \exp(g),\quad g=\log\left(\frac{f}{M}\right).
\end{equation}
Training the NN over $g$ presents several advantages:
\begin{itemize}
    \item Ensures $f> 0$ {by design}, avoiding the need for activation constraints.
    \item The network learns the log-correction $g$, enabling training over unconstrained outputs.
    \item Embeds {prior knowledge} from the equilibrium solution into the network structure.
\end{itemize}
Of course, the residual of the kinetic equation must be adapted.  Substituting \eqref{gf} into \eqref{hkinetic}, we derive the governing equation for $g$:
\begin{equation}
  \label{hg}
  \partial_t g = \frac{\mu}{\eps} \left(\exp{(-g)} - 1\right).
\end{equation}
In the vanishing Knudsen number limit, a dominant balance argument implies $g \to 0$.
Consequently, the relation \eqref{gf} yields
\begin{equation*}
  \lim_{\eps \to 0}f = \lim_{\eps \to 0} M \exp{(g)} = M,
\end{equation*}
confirming the convergence to the equilibrium Maxwellian distribution.

We consider $g^{NN}_{\theta}(v,t)$ to be a deep neural network with inputs $v$, $t$ and trainable parameters $\theta$, to approximate the solution of our system: $g(v, t) \approx g^{NN}_{\theta}(v,t)$.
Moreover, we define the kinetic density approximated by a deep neural network as 
\begin{equation*}
  f^{NN}_{\theta}(v,t) := M(v, t) \exp{(g^{NN}_{\theta}(v,t))}.
\end{equation*}
Based on the transformation \eqref{gf}, we restrict the neural network approximation $g^{NN}_{\theta}(v,t)$ to satisfy the physics imposed by the residual
\begin{equation*}
  \mathcal{L}^\varepsilon_r(\theta) = \sum_{n = 1}^{N_r}  \left\vert {\eps}\partial_t g^{NN}_{\theta}(v^n_r,t^n_r) - \mu\left(\exp{\left(-g^{NN}_{\theta}(v^n_r,t^n_r)\right)} - 1\right) \right\vert^2,
\end{equation*}
on a finite set of $N_r$ user-specified scattered points inside the domain, $\{(v^n_r,t^n_r)\}_{n = 1}^{N_r} \subset \Omega$ (referred as residual points) and we also enforce the initial and space-boundary conditions of the system on $N_b$ scattered points of the space-time boundary $\mathcal{B}(f (v, t))$, i.e. $\{(v^k_b,t^k_b)\}_{k = 1}^{N_b} \subset \partial \Omega$ \cite{karniadakis2021physics,bertaglia2022asymptotic}. 
We also consider to have access to measured data, with a dataset $\{(f^i_d,v^i_d,t^i_d)\}_{i = 1}^{N_d}$, with $f^i_d = f(v^i_d,t^i_d)$, available in a finite set of fixed training points.
The losses of initial and space-boundary conditions $\mathcal{L}_b(\theta)$, and measured data $\mathcal{L}_d(\theta)$ have the detailed expression
\begin{equation*}
\begin{aligned}
  \mathcal{L}_b(\theta) &= \sum_{k = 1}^{N_b}  \left\vert  f^{NN}_{\theta}(v^k_b,t^k_b) - f(v^k_b,t^k_b)\right\vert^2, \\
  \mathcal{L}_d(\theta) &= \sum_{i = 1}^{N_d}  \left\vert  f^{NN}_{\theta}(v^i_d,t^i_d) - f(v^i_d,t^i_d)\right\vert^2. \\
\end{aligned}
\end{equation*}
Moreover, considering the points $\{(v_m^{j,l},t_m^j)\}_{j = 1, l= 1}^{N_m, N_j} \subset \Omega$, we define the loss of moment
{\small
\begin{equation*}
\begin{aligned}
  \mathcal{L}_m(\theta) = \sum_{j = 1}^{N_m} \Bigg( \Bigg\vert \rho_0 - \sum_{l = 1}^{N_j} f^{NN}_{\theta}(v_m^{j,l},t_m^j)  \Bigg\vert^2 &+ \Bigg\vert \rho_0 u_0 - \sum_{l = 1}^{N_j} v_m^{j,l} f^{NN}_{\theta}(v_m^{j,l},t_m^j)  \Bigg\vert^2  \\
  & + \Bigg\vert E_0 - \sum_{l = 1}^{N_j} \frac{\vert v_m^{j,l} \vert^2}{2} f^{NN}_{\theta}(v_m^{j,l},t_m^j)  \Bigg\vert^2 \Bigg), 
\end{aligned}
\end{equation*}
}
to enforce the conservation of mass, momentum, and energy.
Thus, in the training process of the PINN, we minimize the following SP-AP loss function, composed of four mean squared error terms
\begin{equation}
  \mathcal{L}^\varepsilon(\theta) = \omega^{\rm T}_m \mathcal{L}_m(\theta) + \omega^{\rm T}_r \mathcal{L}_r(\theta) + \omega^{\rm T}_b \mathcal{L}_b(\theta) + \omega^{\rm T}_d \mathcal{L}_d(\theta)
\end{equation}
where $\omega_m, \omega_r, \omega_b$, and $\omega_d$ characterize the weights associated to each contribution.
Neural networks relying solely on physical coordinate inputs can only simulate systems under a single set of initial-boundary values. To extend this approach to multiple samples and achieve generalization for the ordinary differential equation (ODE) \eqref{hg}, we simply replace the grid coordinate input $v$ with the initial value $g_0$.
The framework for the homogeneous BGK equation is shown in Fig.~\ref{SANNH}.

\begin{figure}[htb]
    \centering
  \begin{tikzpicture}[x=1.0cm, y=1.0cm, >=Stealth, 
      every node/.style={font=\sffamily},
      every path/.style={line width=1pt},
      inputnode/.style={draw=blue!70, thick, circle, minimum size=0.3cm, font=\sffamily\footnotesize},
      hiddencircle/.style={draw=cyan!60!black, thick, circle, minimum size=0.3cm, font=\sffamily\footnotesize},
      outputnode/.style={draw=blue!70, thick, circle, minimum size=0.3cm, font=\sffamily\footnotesize},
      arrow1/.style={-Stealth, line width = 0.6pt},
      arrow2/.style={-Stealth, line width = 0.5pt},
      dashbox/.style={draw=cyan!60!black, rounded corners, dashed, thick, inner sep=6pt, fill=none},
      rect/.style={draw=orange!100, rounded corners, thick, font=\sffamily\footnotesize}
      ]

  \node[inputnode, inner sep=2pt] (g0) {$g_0$};
  \node[inputnode, below= 0.34cm of g0] (t) {$t$};

  \foreach \i in {1,2,3,4}
      \node[hiddencircle, right=0.9cm of g0, yshift={(\i-3)*0.9cm}] (h1\i) {$\sigma$};

  \foreach \i in {1,2,3,4}
      \node[hiddencircle, right=0.5cm of h13, yshift={(\i-3)*0.9cm}] (h2\i) {$\sigma$};

  \foreach \i in {1,2,3,4}
      \node[hiddencircle, right=0.5cm of h23, yshift={(\i-3)*0.9cm}] (h3\i) {$\sigma$};

    \coordinate (target) at ($(h33)!0.5!(h32)$);
    \node[outputnode, right=0.7cm of target,inner sep=3pt] (g) {$g$};

  \foreach \i in {1,2,3,4} {
      \draw[arrow1, black!100] (g0.east) -- (h1\i.west);
      \draw[arrow1, black!100] (t.east) -- (h1\i.west);
  }

  \foreach \i in {1,2,3,4} {
      \foreach \j in {1,2,3,4} {
          \draw[arrow2, gray!60] (h1\i.east) -- (h2\j.west);
          \draw[arrow2, gray!60] (h2\i.east) -- (h3\j.west);
      }
      \draw[arrow1, black!100] (h3\i.east) -- (g.west);
  }


  \node[fit=(h14) (h31), dashbox ] (boxnn) {};

  \node[draw=gray!70, thick, circle, minimum size=0.5cm, inner sep=0.8pt,  right=0.8cm of g, yshift=0.45cm, font=\sffamily\footnotesize] (f) {$f\!\!=\!\!Me^{g}$};
  \node[draw=green!60!black, thick, circle, minimum size=0.5cm, above=0.8cm of g, inner sep=2pt, font=\sffamily\footnotesize] (M) {$M$};

  \draw[arrow1] (M.east) -- (f.west);
  \draw[arrow1] (g.east) -- (f.west);

  \node[draw=gray!70, thick, circle, minimum size=0.5cm, inner sep=0.9pt, right=0.8cm of g, yshift=-1.35cm] (dt) {$\frac{\partial}{\partial t}$};


  \draw[arrow1] (g.east) -- (dt.west);

  \node[rect, right=4.6cm of h34, minimum width=1cm, minimum height=0.6cm, anchor=center] (momentloss) {$\mathcal{L}_{m}$};
  \node[rect, right=4.6cm of h33, minimum width=1cm, minimum height=0.6cm, anchor=center] (dataloss) {$\mathcal{L}_{d}$};
  \node[rect, right=4.6cm of h32, minimum width=1cm, minimum height=0.6cm, anchor=center] (icloss) {$\mathcal{L}_{b}$};
  \node[rect, right=4.6cm of h31, minimum width=1cm, minimum height=0.6cm, anchor=center] (pdeloss) {$\mathcal{L}_{r}$};

  \node[fit=(momentloss) (pdeloss), dashbox, inner sep=4pt, draw=orange!100] (boxnn) {};

  \draw[arrow1, draw=orange!70!black] (f.east) -- (momentloss.west);
  \draw[arrow1, draw=orange!70!black] (f.east) -- (dataloss.west);
  \draw[arrow1, draw=orange!70!black] (f.east) -- (icloss.west);
  \draw[arrow1, draw=orange!70!black] (dt.east) -- (pdeloss.west);

  \node[rect, right=2cm of $(dataloss)!0.5!(momentloss)$, minimum width=1cm, minimum height=0.6cm, anchor=center] (totalloss) {$\mathcal{L}^\varepsilon$};

  \draw[arrow1, draw=orange!70!black] ($(dataloss)!0.5!(momentloss) + (0.63,0)$) -- (totalloss.west);

  \node[rect, draw=yellow!80!black, below=1.8cm of totalloss, minimum width=1cm, minimum height=0.6cm, anchor=center] (theta) {$\theta$};

  \draw[arrow1, draw=yellow!40!black] (totalloss.south) -- (theta.north);

  \node[left=-0.2cm of totalloss, yshift=-0.35cm, font=\sffamily\footnotesize, rotate=90] (Min) {Minimize};

  \end{tikzpicture}

  \caption{SAPNN framework for homogeneous BGK equation preserving positivity and enforcing physical conservations.}
    \label{SANNH}
\end{figure}
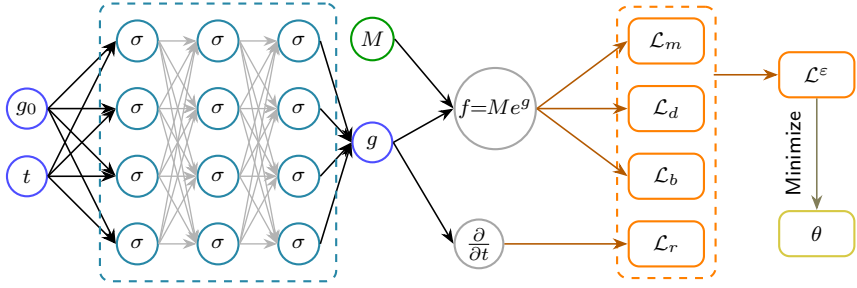

\begin{remark}
  In the case of homogeneous BGK equations, \eqref{hg} essentially reduces to a system of ODEs, whose temporal evolution is governed solely by the initial conditions and the parameter $\eps$. Following the existence and uniqueness theorem for ODEs, it suffices to ensure that the training samples adequately cover the principal support region of the initial condition's probability density function, without explicitly incorporating the stochastic coordinate $z$ as an input parameter in the neural network. However, for nonhomogeneous scenarios, the governing equation transforms into a partial differential equation containing spatial derivative terms. This structural change introduces characteristic-dependent coupling between solutions at adjacent spatial locations. To properly capture such spatial dependencies and enhance the model's generalization capability under varying initial-boundary conditions, it becomes necessary to explicitly include the stochastic variable $z$ as an additional input parameter in the neural network architecture.
\end{remark}

\subsubsection{Space non homogeneous case: the asymptotic preserving property}
To ensure theoretical completeness, we extend the methodology proposed in \cite{jin2024ap} to multi-scale scenarios where the AP property becomes essential. 
Following the theoretical framework in \cite{jin2011class}, the BGK equation \eqref{kinetic} and its moment equations \eqref{moment} can be jointly formulated as a coupled system:
\begin{equation*}
\left\{
\begin{aligned}
  &\eps (\partial_t f + v \cdot \partial_x f) = \mu (M[U] - f), \\
  &\partial_t U + \nabla_x \cdot \int_{\mathbb{R}^{d_v}} v f \phi(v) ~{\rm d}v = 0, \\
  & U = \int_{\mathbb{R}^{d_v}} f \phi(v) ~{\rm d}v.
\end{aligned}
\right.
\end{equation*}
where the distribution function $f$ asymptotically converges to the local Maxwellian $M[U]$ causing the second equation to reduce to the compressible Euler system \eqref{Euler}.  An asymptotic-preserving neural network is build in order to satisfy the same property as sketched in Figure \ref{fg:AP} where the loss function $\mathcal{L}^{\varepsilon}$ of the BGK model automatically yields a consistent loss function $\mathcal{L}^{0}$ of the Euler system as $\varepsilon$ goes to zero.
\begin{figure}[htb]
\begin{center}
\setlength{\unitlength}{1.5cm}
\begin{picture}(4,5)(1,1)
\put(1,5){\large{$\blue{BGK}$}}
\put(1,2){\large{$\red{\mathcal{L}^{\varepsilon}}$}}
\put(1.34,3.2){\rotatebox{90}{$\Delta t\to 0$}}
\put(4.4,3.2){\rotatebox{90}{$\Delta t\to 0$}}
\put(2.4,5.3){$\blue{\varepsilon\to 0}$}
\put(2.4,2.4){$\red{\varepsilon\to 0}$}
\put(1.2,2.5){\vector(0,1){2.2}} 
\put(1.7,2.2){\vector(1,0){2.2}}
\put(4,5){\large{$\blue{Euler}$}} 
\put(4,2){\large{$\red{\mathcal{L}^{0}}$}} 
\put(4.2,2.5){\vector(0,1){2.2}}
\put(1.8,5.1){\vector(1,0){2.1}}
\end{picture}
\end{center}
\vskip -1cm
\caption{AP property in NN with $\mathcal{L}^{\varepsilon}$ the loss function of the BGK model and $\mathcal{L}^{0}$ the loss function of the Euler model.}
\label{fg:AP}
\end{figure}
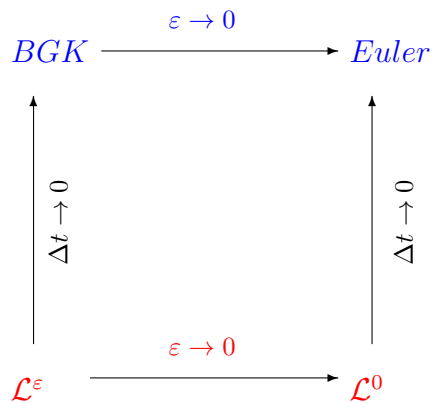

In \cite{jin2024ap}, four separate neural networks were constructed: one sub-network $f_{\theta}^{NN}$ for the BGK model, and three additional sub-networks ${\rho}_{\theta}^{NN}$, ${u}_{\theta}^{NN}$, and ${T}_{\theta}^{NN}$ to model density, velocity, and temperature components, with inputs $(x, v, t)$.
In this paper, we only design two deep neural networks, one $g_{\theta}^{NN}$ for the BGK model, and another one $\tilde{W}_{\theta}^{NN} = (\tilde{\rho} , \tilde{u}, \tilde{T})_{\theta}^{NN}$, with inputs $(x, t, z)$.
Specifically, in contrast to prior architectures that explicitly depend on velocity variable $v$, our proposed BGK model network $g_{\theta}^{NN}$ employs a structured dimension-reduction strategy, taking only physical coordinates $x$, time $t$, and stochastic perturbation $z$ as inputs (without explicit velocity dependence). 
The network outputs a discretized distribution function vector defined on a uniform grid in the 2D velocity space:
\begin{equation*}
  g_{\theta}^{\rm NN}(x,t,z) = \left( g_1, g_2, \ldots, g_{N_l} \right)^{\top} \in \mathbb{R}^{N_l},
\end{equation*}
where $N_l = N_{v1} \times N_{v2}$ denotes the total number of grid points after velocity-space discretization. 
By decoupling the explicit dependence on $v$, this design reduces the input dimension from $(x, v, t, z) \in \mathbb{R}^{d_x + d_v+d_z + 1}$ to $(x, t, z) \in \mathbb{R}^{d_x +d_z + 1}$, thereby achieving greater computational efficiency in large-scale multi-sample training while preserving physical interpretability.

In the space non homogeneous case, the Maxwellian distribution $M$, which is no longer at steady state, depends on both time $t$ and physical space $x$.
Substituting equation \eqref{gf} into \eqref{kinetic}, we obtain the following form of the non-homogeneous BGK equation:
\begin{equation}
\label{complexg}
  M (\partial_t g + v \cdot \nabla_x g) + \partial_t M + v \cdot \nabla_x M = \frac{\mu}{\eps} M (\exp{(-g)} - 1),
\end{equation}
or equivalentely
\begin{equation*}
  \partial_t g + v \cdot \nabla_x g + \partial_t \log(M) + v \cdot \nabla_x \log(M) = \frac{\mu}{\eps}(\exp{(-g)} - 1).
\end{equation*}
There are two approaches to training this equation. 
One approach is to construct a separate model to capture the evolution of $M$, which requires significant computational resources.
Alternatively, $M$ can be treated as a function of the macroscopic variables $W = (\rho, u, T)$, allowing $\partial_t M$ and $\partial_x M$ to be expressed in terms of $\partial_t W$ and $\partial_x W$.
While this approach is more computationally efficient, it introduces additional complexity to the equation and may reduce the stability of the simulation.

To simplify the computation, in this work we consider the transformation $f = \exp{(g)}$ for the non homogeneous BGK.
Under this transformation, equation \eqref{kinetic} becomes
\begin{equation*}
  \partial_t g + v \cdot \nabla_x g = \frac{\mu}{\eps} (\exp{(-g)} - 1),
\end{equation*}
which is significantly simpler than the original form \eqref{complexg}.
To ensure the positivity of the predicted density $\rho^{NN}_\theta$ and temperature $T^{NN}_\theta$, we adopt the technique proposed in \cite{jin2024ap}, where
\begin{equation*}
  \rho^{NN}_\theta = \log{(1 + \exp{(\tilde{\rho}^{NN}_\theta)})}, \quad T^{NN}_\theta = \log{(1 + \exp{(\tilde{T}^{NN}_\theta)})}.
\end{equation*}

The SAPNN empirical risk for the system of BGK equation is
\begin{equation}
  \mathcal{L}^\varepsilon(\theta)=\omega^{\rm T}_m \mathcal{L}_m(\theta) + \omega^{\rm T}_{r1} \mathcal{L}^\varepsilon_{r1}(\theta)+ \omega^{\rm T}_{r2} \mathcal{L}_{r2}(\theta) + \omega^{\rm T}_b \mathcal{L}_b(\theta) + \omega^{\rm T}_d \mathcal{L}_d(\theta)
\end{equation}
where 
\begin{equation*}
\begin{aligned}
  \mathcal{L}_m(\theta) &= \sum_{h = 1}^{3} \sum_{j = 1}^{N_m}  \left\vert U_{\theta,h}^{NN}(x_m^{j},t_m^j, z_m^j) - \sum_{l = 1}^{N_l} f^{NN}_{\theta, l}(x_m^{j},t_m^j, z_m^j) \phi_h(v_l)  \right\vert^2, \\
  \mathcal{L}^\varepsilon_{r1}(\theta) &= \sum^{N_{r1}}_{n1 = 1}\sum^{N_{l}}_{l = 1} \left\vert \left( \eps \left(\partial_t g^{NN}_{\theta, l}  + v_l \cdot \partial_x g^{NN}_{\theta, l}  \right) - \mu\left(\exp{\left( -  g^{NN}_{\theta, l}\right)} - 1\right)\right) (x_{r1}^{n1},t_{r1}^{n1}, z_{r1}^{n1})  \right\vert^2\\
  \mathcal{L}_{r2}(\theta) &= \sum_{h = 1}^{3} \sum_{n2 = 1}^{N_{r2}}  \left\vert \partial_t U_{\theta,h}^{NN}(x_{r2}^{n2},t_{r2}^{n2}, z_{r2}^{n2}) + \partial_x \left(\sum_{l = 1}^{N_l} v_l f^{NN}_{\theta, l}(x_{r2}^{n2},t_{r2}^{n2}, z_{r2}^{n2}) \phi_h(v_l)  \right)  \right\vert^2, \\ 
  \mathcal{L}_b(\theta)  &= \sum_{k = 1}^{N_b} \left(\sum^{N_{l}}_{l = 1} \left\vert  \left(f^{NN}_{\theta,l} - f_{l}\right) (x^k_b,t^k_b,z^k_b) \right\vert^2 + \sum^{3}_{h = 1}\left\vert  \left(U^{NN}_{\theta,h} - U_{h}\right) (x^k_b,t^k_b,z^k_b) \right\vert^2 \right), \\
  \mathcal{L}_d(\theta) &= \sum_{i = 1}^{N_d} \left(\sum^{N_{l}}_{l = 1} \left\vert  \left(f^{NN}_{\theta,l} - f_{l}\right) (x^i_d,t^i_d,z^i_d) \right\vert^2 + \sum^{3}_{h = 1}\left\vert  \left(U^{NN}_{\theta,h} - U_{h}\right) (x^i_d,t^i_d,z^i_d) \right\vert^2 \right),
\end{aligned}
\end{equation*}
with different weights $\omega_m, \omega_{r1}, \omega_{r2}, \omega_b$, and $\omega_d$.
Here, the superscripts and subscripts of the coordinates $x, t$, and $z$ remain the same as previously described and will not be repeated for the sake of brevity.
We sketch the framework of the SAPNN for the multiscale space non homogeneous BGK model in Fig.~\ref{SANNNH}.
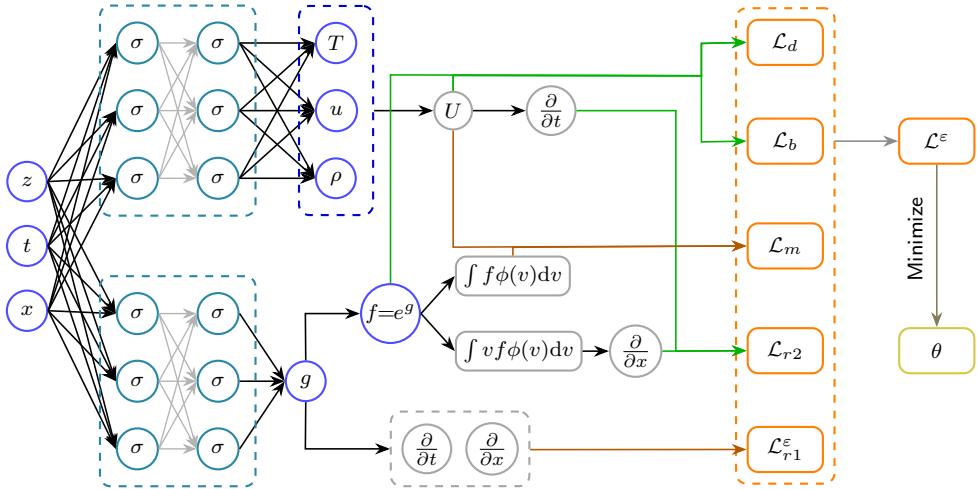
\begin{figure}[htb]
    \centering
  \begin{tikzpicture}[x=1.0cm, y=1.0cm, >=Stealth, 
      every node/.style={font=\sffamily},
      every path/.style={line width=1pt},
      inputnode/.style={draw=blue!70, thick, circle, minimum size=0.3cm, font=\sffamily\footnotesize},
      hiddencircle/.style={draw=cyan!60!black, thick, circle, minimum size=0.3cm, font=\sffamily\footnotesize},
      outputnode/.style={draw=blue!70, thick, circle, minimum size=0.3cm, font=\sffamily\footnotesize},
      arrow1/.style={-Stealth, line width = 0.6pt},
      arrow2/.style={-Stealth, line width = 0.5pt},
      dashbox/.style={draw=cyan!60!black, rounded corners, dashed, thick, inner sep=6pt, fill=none},
      rect/.style={draw=orange!100, rounded corners, thick, font=\sffamily\footnotesize}
      ]

  \node[inputnode] (t) {$t$};
  \node[inputnode, below= 0.3cm of t] (x) {$x$};
  \node[inputnode, above= 0.3cm of t] (z) {$z$};

  \foreach \i in {1,2,3}
      \node[hiddencircle, right=0.9cm of t, yshift={(\i)*0.9cm}] (a1\i) {$\sigma$};

  \foreach \i in {1,2,3}
      \node[hiddencircle, right=0.9cm of t, yshift={-(\i)*0.9cm}] (b1\i) {$\sigma$};

  \foreach \i in {1,2,3}
      \node[hiddencircle, right=0.5cm of a1\i] (a2\i) {$\sigma$};

  \foreach \i in {1,2,3}
      \node[hiddencircle, right=0.5cm of b1\i] (b2\i) {$\sigma$};

  \node[outputnode, right=1cm of a21, inner sep=3pt] (rho) {$\rho$};
  \node[outputnode, right=1cm of a22, inner sep=3.4pt ] (u) {$u$};
  \node[outputnode, right=1cm of a23, inner sep=2.5pt] (T) {$T$};
  \node[outputnode, right=0.6cm of b22, inner sep=3pt ] (g) {$g$};

  \foreach \i in {1,2,3} {
      \draw[arrow1, black!100] (z.east) -- (a1\i.west);
      \draw[arrow1, black!100] (t.east) -- (a1\i.west);
      \draw[arrow1, black!100] (x.east) -- (a1\i.west);

      \draw[arrow1, black!100] (z.east) -- (b1\i.west);
      \draw[arrow1, black!100] (t.east) -- (b1\i.west);
      \draw[arrow1, black!100] (x.east) -- (b1\i.west);
  }

  \foreach \i in {1,2,3} {
      \foreach \j in {1,2,3} {
          \draw[arrow2, gray!60] (a1\i.east) -- (a2\j.west);
          \draw[arrow2, gray!60] (b1\i.east) -- (b2\j.west);
      }
      \draw[arrow1, black!100] (b2\i.east) -- (g.west);
      \draw[arrow1, black!100] (a2\i.east) -- (rho.west);
      \draw[arrow1, black!100] (a2\i.east) -- (T.west);
      \draw[arrow1, black!100] (a2\i.east) -- (u.west);
  }


  \node[fit=(a11) (a23), dashbox ] (boxnn) {};
  \node[fit=(b11) (b23), dashbox ] (boxnn) {};
  \node[fit=(T) (rho), dashbox, draw=blue!80!black ] (boxnn) {};

  \node[outputnode, right=1.6cm of b21, inner sep=0.8pt] (f) {$f\!\!=\!\!e^{g}$};
  \node[draw=gray!70, thick, circle, minimum size=0.5cm, right=1cm of u, inner sep=2pt, font=\sffamily\footnotesize] (U) {$U$};
  \node[draw=gray!70, thick, circle, minimum size=0.5cm, inner sep=0.9pt, right=2.15cm of b23] (dt) {$\frac{\partial}{\partial t}$};
  \node[draw=gray!70, thick, circle, minimum size=0.5cm, inner sep=0.9pt, right=3cm of b23] (dx) {$\frac{\partial}{\partial x}$};

  \draw[arrow1] (g.north) --($(b21)+(1.16, 0)$) -- (f.west);
  \draw[arrow1] (g.south) --($(b23)+(1.16, 0)$) -- ($(dt.west)-(0.15, 0)$);
  \draw[arrow1] ($(u)+(0.5, 0)$) -- (U.west);

  \node[draw=gray!70, thick, circle, minimum size=0.5cm, inner sep=0.9pt, right=0.7cm of U] (dt2) {$\frac{\partial}{\partial t}$};

  \node[fit=(dx) (dt), dashbox, inner sep=4pt, draw=gray!70!] (boxnn) {};

  \draw[arrow1] (U.east) -- (dt2.west);

  \node[rect, draw=gray!70, inner sep=2.9pt, right=0.45cm of f, yshift = 0.5cm] (Uf) {$\int f\phi(v) {\rm d}v$};
  \node[rect, draw=gray!70, inner sep=2.9pt, right=0.45cm of f, yshift = -0.5cm] (Ff) {$\int vf\phi(v) {\rm d}v$};
  \node[draw=gray!70, thick, circle, minimum size=0.5cm, inner sep=0.9pt, right=0.35cm of Ff] (dx2) {$\frac{\partial}{\partial x}$};
  \draw[arrow1] (f.east) -- (Uf.west);
  \draw[arrow1] (f.east) -- (Ff.west);
  \draw[arrow1] (Ff.east) -- (dx2.west);

  \node[rect, right=9.8 of t, minimum width=1cm, minimum height=0.6cm, anchor=center] (closs) {$\mathcal{L}_{m}$};
  \node[rect, below=1.08cm of closs, minimum width=1cm, minimum height=0.6cm, anchor=center] (pde2loss) {$\mathcal{L}_{r2}$};
  \node[rect, below=1cm of pde2loss, minimum width=1cm, minimum height=0.6cm, anchor=center] (pde1loss) {$\mathcal{L}^{\varepsilon}_{r1}$};
  \node[rect, above=1.08cm of closs, minimum width=1cm, minimum height=0.6cm, anchor=center] (bloss) {$\mathcal{L}_{b}$};
  \node[rect, above=1cm of bloss, minimum width=1cm, minimum height=0.6cm, anchor=center] (dloss) {$\mathcal{L}_{d}$};

  \node[fit=(pde1loss) (dloss), dashbox, inner sep=4pt, draw=orange!100] (boxnn) {};

  \draw[arrow1, draw=orange!70!black] ($(dx.east)+(0.15,-0.01)$) -- (pde1loss.west);

  \draw[arrow1, draw=orange!70!black] (U.south) -- ($(U.south)+(0, -1.552)$) -- (closs.west);
  \draw[arrow1, draw=orange!70!black] (Uf.north) -- ($(Uf.north)+(0, 0.11)$) -- (closs.west);

  \draw[arrow1, draw=green!70!black] (dx2.east) -- (pde2loss.west);
  \draw[arrow1, draw=green!70!black] (dt2.east) -- ($(dt2.east) + (1.32,0)$) -- ($(dt2.east) + (1.32,-3.2)$) -- (pde2loss.west);

  \draw[arrow1, draw=green!70!black] (U.north) -- ($(U.north) + (0, 0.205)$) -- ($(U.north) + (3.3, 0.2)$) -- ($(U.north) + (3.3, 0.63)$) -- (dloss.west);
  \draw[arrow1, draw=green!70!black] (U.north) -- ($(U.north) + (0, 0.205)$) -- ($(U.north) + (3.3, 0.2)$) -- ($(U.north) + (3.3, -0.68)$) -- (bloss.west);
  \draw[arrow1, draw=green!70!black] (f.north) -- ($(f.north) + (0, 2.77)$) -- ($(U.north) + (3.3, 0.2)$) -- ($(U.north) + (3.3, 0.63)$) -- (dloss.west);

  \node[rect, right=1.5cm of bloss, minimum width=1cm, minimum height=0.6cm, anchor=center] (totalloss) {$\mathcal{L}^\varepsilon$};

  \draw[arrow1, draw=gray!90] ($(bloss.east)+(0.14,0)$) -- (totalloss.west);

  \node[rect, draw=yellow!80!black, right=1.5cm of pde2loss, minimum width=1cm, minimum height=0.6cm, anchor=center] (theta) {$\theta$};

  \draw[arrow1, draw=yellow!40!black] (totalloss.south) -- (theta.north);

  \node[right=-0.8cm of totalloss, yshift=-2.cm, font=\sffamily\footnotesize, rotate=90] (Min) {Minimize};

  \end{tikzpicture}

  \caption{SAPNN framework for nonhomogeneous BGK equation. In addition to positivity and conservation the network enjoys the AP property.}
    \label{SANNNH}
\end{figure}

\section{Numerical examples}
\label{sec5}
We divide the field $\Omega_z$ of random data $z$ into $N_1$ equal cells, and consider $N_2$ Gauss-Lobatto points for each cell, with $z_{ij}$ stands for the $j$-th Gauss-Lobatto point in cell $i$. 
Therefore, the following results solved deterministically using classical numerical methods for Boltzmann equation can be regarded as a reference solution for expected value
\begin{equation*}
\label{fGL}
  \mathbb{E}_{GL}[f] = \frac{1}{N_1} \sum_{i = 1}^{N_1} \sum_{j = 1}^{N_2} \omega_j f(z_{ij})
\end{equation*}
where $\omega_j$, $j = 1, 2, \ldots, N_2$, are the Gauss-Lobatto weights.

We emphasize that the BGK samples are trained and generated using the SAPNN framework, whereas the Euler samples are obtained by training with the PINN framework \cite{liu2024discontinuity}, incorporating an additional input $z$ and monitoring data. In contrast, the Boltzmann samples are produced using the fast deterministic spectral method \cite{mouhot2006fast}.

In the following numerical example, $f_B$, $f_E$, and $f_{Bolt}$ denote the distribution functions corresponding to the BGK model, the Euler model, and the Boltzmann model, respectively.
Let SAPNN($f_B$) and PINN($f_E$) denote the numerical solution computed by applying a variance-reduced Monte Carlo method to the approximation BGK model and Euler model, respectively.
SAPNN($f_I$, $f_J$) represents the solution enhanced by a multiple variance reduction technique applied jointly to two functions $f_I$ and $f_J$.

\subsection{Entropic calibration of BGK equations}
One essential aspect in preventing inconsistencies between the high-fidelity Boltzmann data and the low-fidelity BGK surrogate model during training is to ensure that the two models are as closely aligned as possible over the time interval of interest. Since the BGK model is an approximation of the Boltzmann equation with a simplified collision operator, its accuracy heavily depends on the choice of the collision frequency $\mu$, which governs the rate at which the system relaxes toward equilibrium.

To systematically calibrate this parameter and improve the consistency between the two models, we adopt a data-driven strategy based on minimizing the discrepancy in entropy evolution. Specifically, for a given set of Boltzmann data, we estimate the optimal relaxation frequency $\mu^*$ 
  by minimizing the error in the entropy functional, defined in \eqref{eq:H}
which is a classical Lyapunov functional for both the Boltzmann and BGK equations in the homogeneous setting and is known to be monotonically decreasing in time due to entropy dissipation.

In this context, the entropy serves as a physically meaningful scalar quantity that reflects the underlying kinetic relaxation dynamics. By comparing the entropy evolution computed from the BGK model, $H(f_B)$, with that of the reference Boltzmann solution, $H(f_{\text{Bolt}})$, we define a cost functional 
\[
\mathcal{J}(\mu)=\| H(f_B) - H(f_{\text{Bolt}}) \|,
\]
which acts as an indicator of the dynamic mismatch between the two models. 

Minimizing this entropy-based discrepancy with respect to $\mu$ yields an {optimal relaxation parameter} $\mu^*$ that makes the BGK model best reproduce the macroscopic behavior of the Boltzmann system in terms of entropy dissipation. The corresponding calibrated BGK solution is denoted by $f_B^*$, and will be used throughout the training process as the low-fidelity surrogate aligned with the high-fidelity data.
\begin{remark}
In principle, one could consider the relaxation parameter $\mu$ as an additional trainable variable of the neural network, and include the entropy-based discrepancy directly in the loss function. This would yield a fully end-to-end training process, where the optimal value of $\mu$ is learned jointly with the network parameters. However, we found that an offline calibration of $\mu$, based on a simple scalar minimization of the entropy discrepancy prior to training, significantly reduces the computational burden and improves the stability of the optimization. This preliminary step decouples the tuning of the kinetic scale from the learning process, and results in faster and more robust convergence.
\end{remark}

\subsection{Space homogeneous Boltzmann equation with uncertain data}
All experiments in the homogeneous study were conducted using a single training instance randomly sampled from the data distribution, enabling the evaluation of the model's extrapolation capability to arbitrary unseen random points during testing.
The available measurement data are randomly sampled and confined to the temporal subdomain $t \in [0, 2T/5]$.
The neural network architecture for both $f_B$ and $f_B^*$ consists of 7 fully-connected layers with 100 neurons each, using the hyperbolic tangent (tanh) activation function.

\subsubsection{Two bumps relaxation} \label{exam1}
In this example, we consider two bumps problem with uncertainty
\begin{equation*}
  f_0(z, v) = \frac{\rho_0}{2 \pi} \left( \exp{\left(- \frac{\vert v - s + d \vert^2}{\sigma} \right)} + \exp{\left(- \frac{\vert v - s - d\vert^2}{\sigma} \right)}\right),
\end{equation*}
where $d = 1.5$ and $s(z) = z_1(\sin{(2 \pi z_2)}, \cos{(2 \pi z_2)})^{\rm T}$ represents the stochastic displacement vector. 
Here, $z = (z_1, z_2)^{\rm T} \in (-1, 1) \times (0, 1)$ are random variables: $z_1$ controls the magnitude of displacement, and $z_2$ determines the direction angle. 
The parameter $\sigma = 0.5$ governs the thermal width of the bumps, and $\rho_0 = 0.75$ is the reference density. 
The velocity domain is truncated to $v \in [-10, 10] \times [-10, 10]$ with $\eps = 1$ and a final time $T = 2$. 
We note that this example constitutes a five-dimensional problem, comprising two dimensions in velocity, two in uncertainty, and one in time.
\begin{figure}[tb]
    \begin{center}
        \mbox{
        {\includegraphics[width = 0.4 \textwidth, trim=15 0 15 0,clip]{./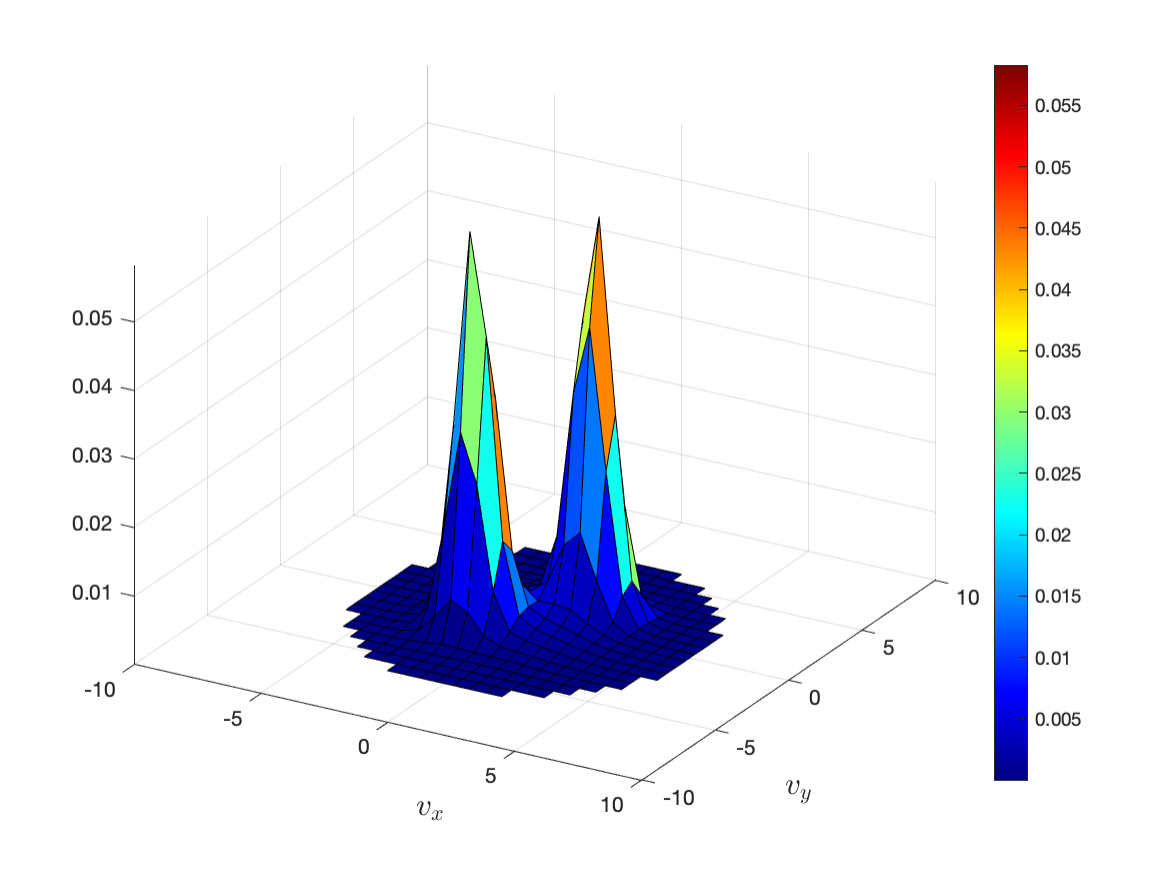}}
        {\includegraphics[width = 0.4 \textwidth, trim=15 0 15 0,clip]{./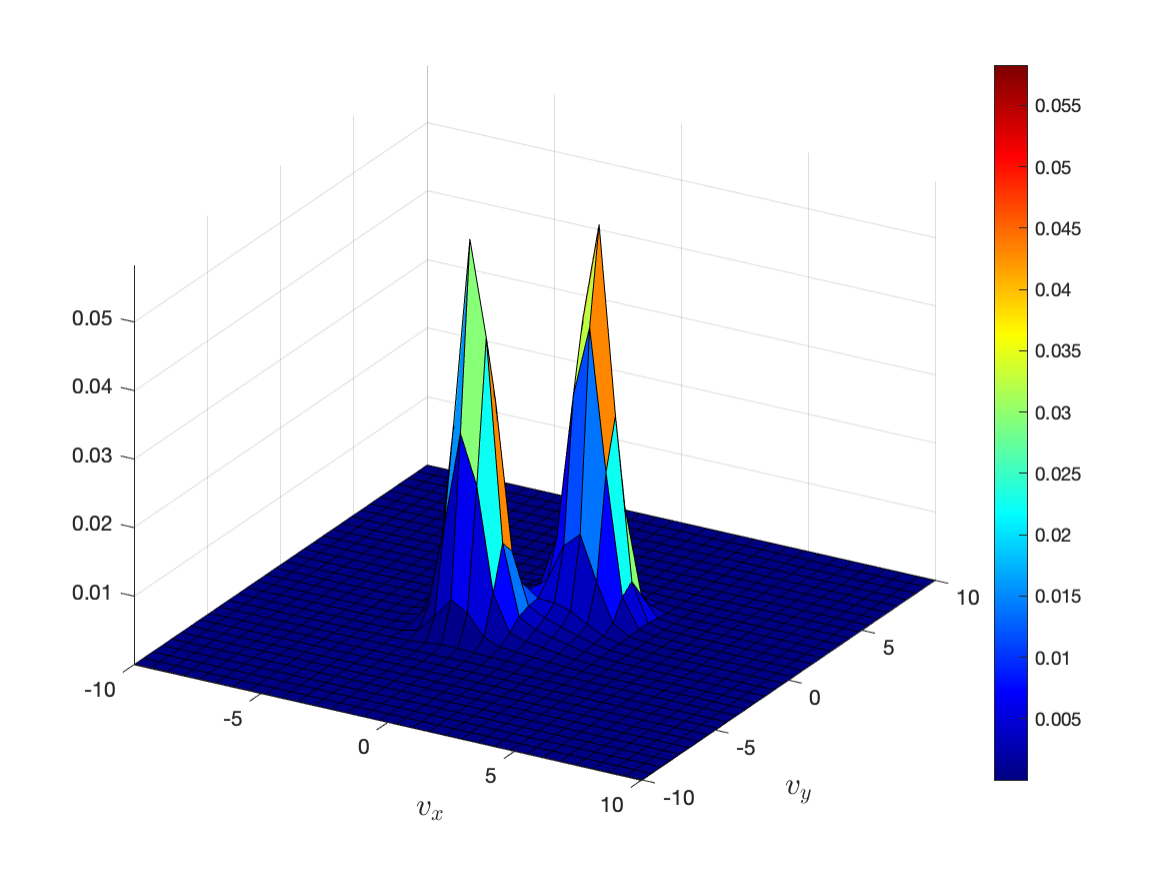}}}\\
        \mbox{
        {\includegraphics[width = 0.4 \textwidth, trim=15 0 15 0,clip]{./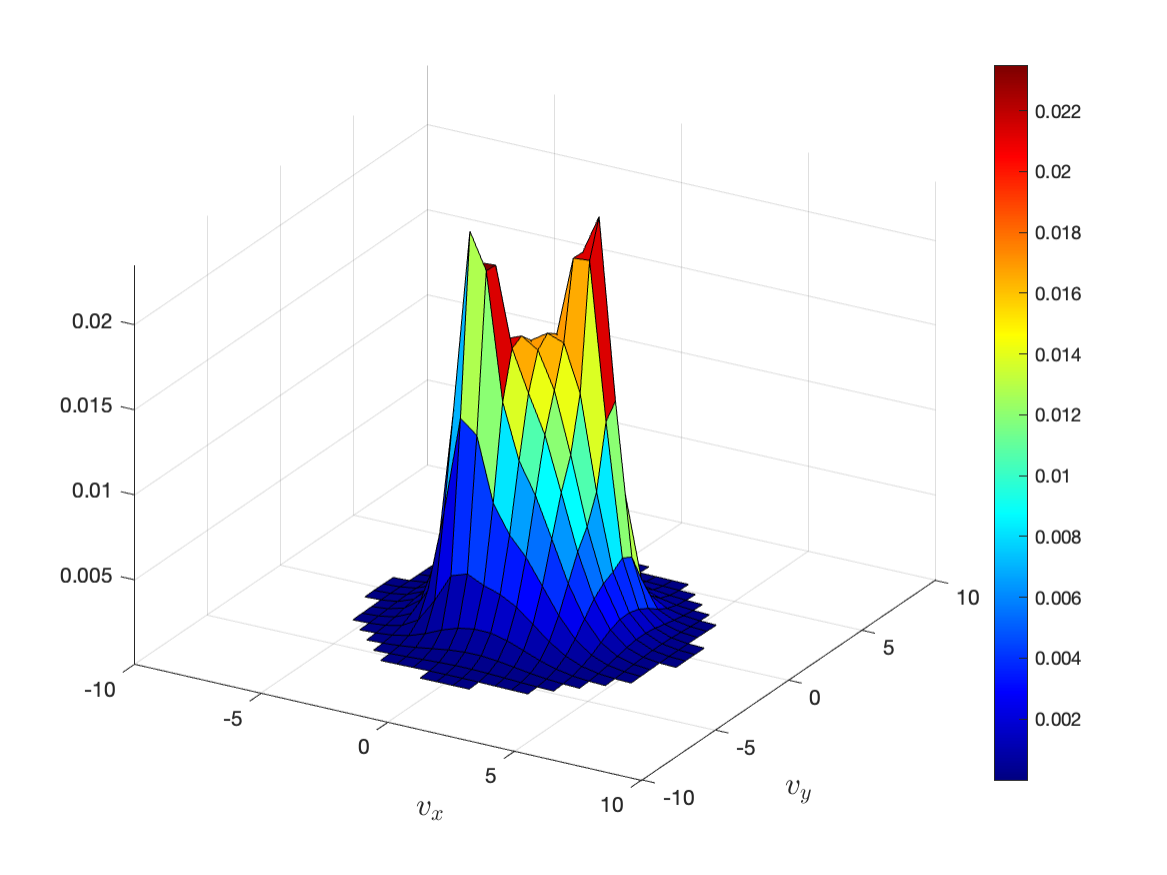}}
        {\includegraphics[width = 0.4 \textwidth, trim=15 0 15 0,clip]{./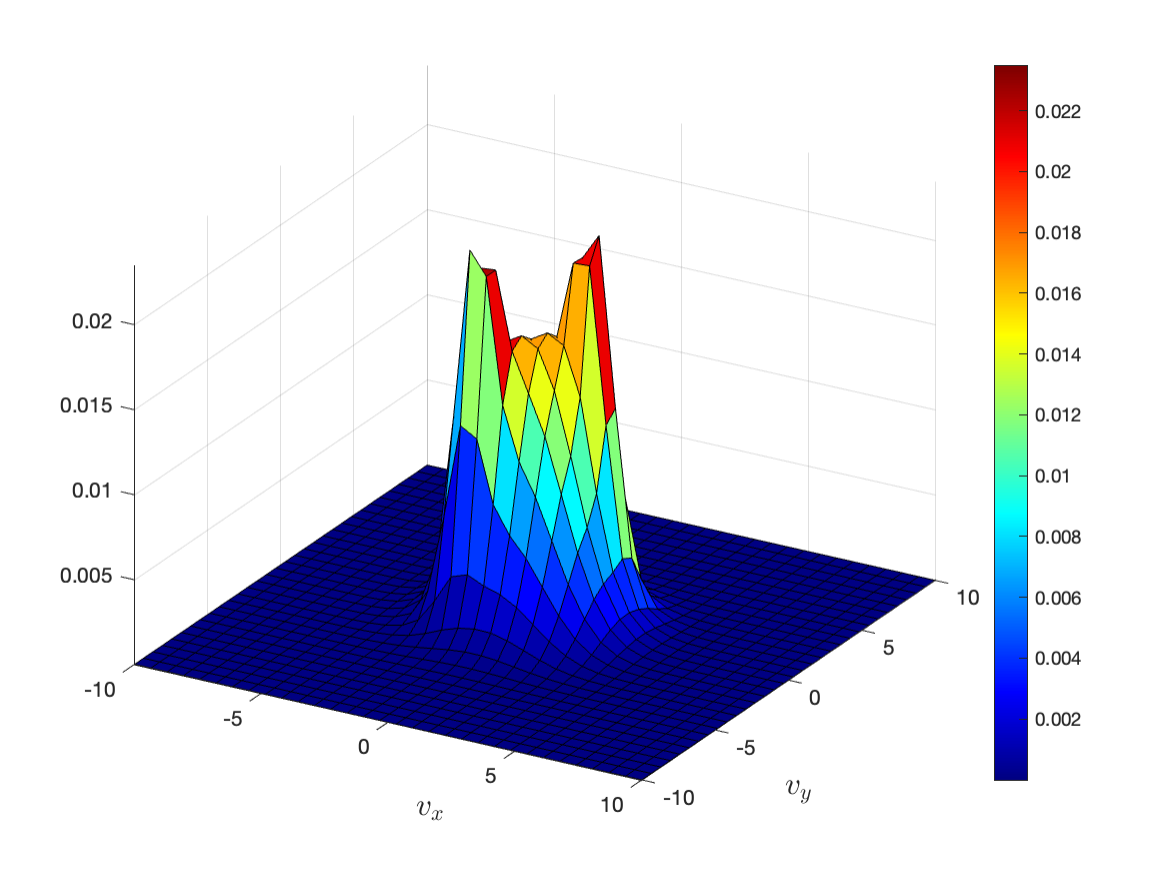}}}\\
        \caption{\sf $\mathbb{E}[f]$ for Example \ref{exam1}. Left: PINN; Right: SAPNN; Top: $t = 0.2$; Bottom: $t = 1.4$. The plots display only the positive part of the NN approximation.}
        \label{SP}
    \end{center}
\end{figure}
\begin{figure}[hbtp]
    \begin{center}
        \mbox{
        {\includegraphics[width = 0.4 \textwidth, trim=0 0 0 0,clip]{./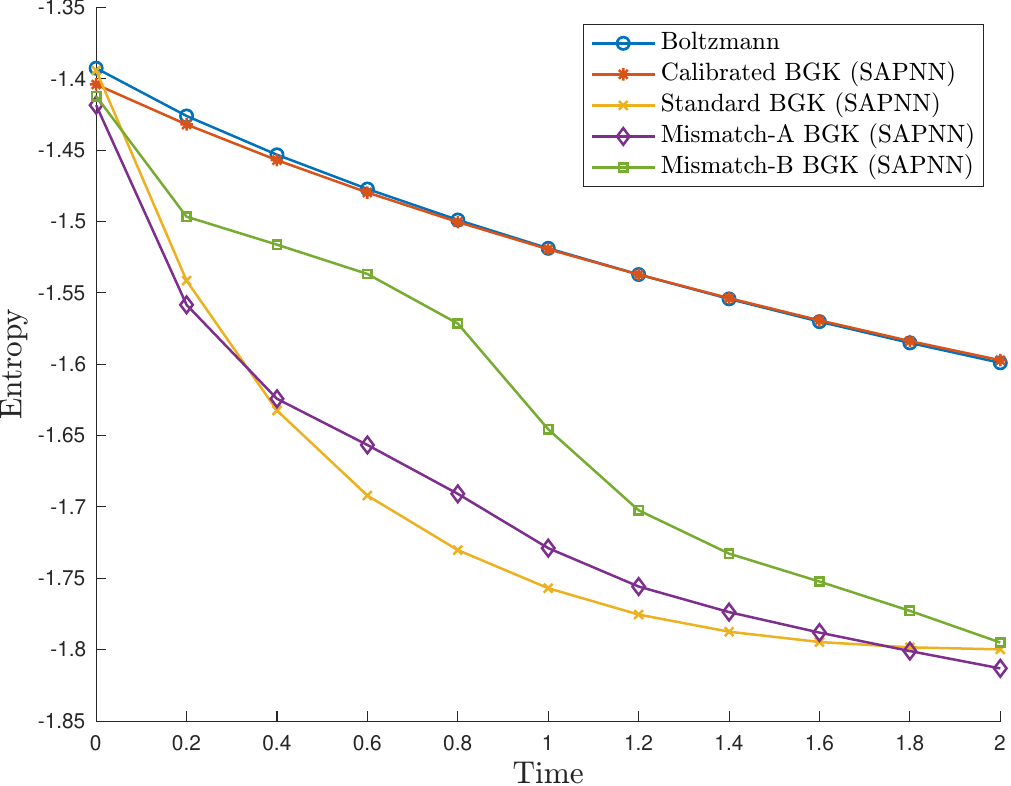}}}\\
        \caption{\sf Entropy for Example \ref{exam1}. Non-convex behaviors are observed unless the surrogate BGK model is aligned with the Boltzmann data.}
        \label{EN}
    \end{center}
\end{figure}

To demonstrate the advantages of structure preservation, we compare the expected values of $f$ obtained from PINN and SAPNN at different times $t = 0.2$ and $t = 1.4$, as shown in Fig.~\ref{SP}. 
While both methods effectively simulate the BGK model, the results produced by PINN -- without structure-preserving techniques -- exhibit unphysical negative values.

For this test case, minimization over the difference of the entropy functionals yields an optimal value $(\mu^*)^{-1}\approx  6.3$. Fig.~\ref{EN} presents the entropy evolution for the Boltzmann equation, the calibrated BGK (SAPNN), and the standard BGK (SAPNN), showing that the calibrated BGK entropy closely matches that of the Boltzmann model.

In addition, we designed two numerical schemes, the Mismatch-A BGK and Mismatch-B BGK, both trained using Boltzmann-derived data while maintaining a relaxation rate $\mu=1$. 
For Mismatch-A BGK, the loss weights align with those of the previously mentioned numerical schemes, whereas the PDE loss weight for Mismatch-B BGK is reduced to $1/20$ of the original value.
The numerical results for both schemes are presented in Fig.~\ref{EN}. 
Due to significant incompatibility between the Boltzmann training data and the BGK model with $\eps=1$, both entropy profiles exhibit poor performance. 
In Mismatch-B BGK, where data loss dominates, the entropy initially aligns more closely with the Boltzmann model in regions supervised by training data. 
However, as time progresses and data supervision diminishes, the entropy rapidly decays toward the Standard BGK profile. 
Conversely, Mismatch-A BGK—with its higher PDE loss weight—is dominated by PDE model errors, causing its entropy to closely follow the Standard BGK profile. 
Nevertheless, incompatibility with training data introduces oscillations in the entropy. 

These counterexamples highlights two critical points: 
\begin{itemize}
\item surrogate models in PINN must be highly compatible with the training data;
\item the model selection in PINN often has a greater influence on performance than the training data itself.
\end{itemize}

Fig.~\ref{EM} presents the $L_1$ error of expected values $\mathbb{E}[f]$ comparisons between the Boltzmann model and two BGK-based models:  the SAPNN-based standard BGK model $\left( {\rm i.e.} \; \Vert \mathbb{E}[f_B] - \mathbb{E}[f_{Bolt}] \Vert_1 \right)$, and the APNN-based calibrated BGK model (i.e. $\Vert \mathbb{E}[f_B^*] - \mathbb{E}[f_{Bolt}] \Vert_1$) at different temporal stages. 
Clearly, the $L_1$ error further confirms that the calibrated BGK model provides a closer approximation to the Boltzmann model than the standard BGK model.
The standard BGK model shares the same initial condition with the Boltzmann model (yielding zero error at $t = 0$); however, due to inherent differences between the two models, the error gradually increases over time.
With $\eps = 1$, the standard BGK model achieves equilibrium state faster than the Boltzmann solution (reaching equilibrium at $t\approx2$ per entropy analysis, see Fig.~\ref{EN}), resulting in peak errors near this temporal boundary. 
For $t>2$, as the Boltzmann solution continues evolving toward the same equilibrium state already attained by the standard BGK, the errors decrease rapidly.
While showing similar error growth-decay patterns, the calibrated BGK demonstrates: milder error growth and slower error decay post-peak due to synchronized approach to equilibrium state.
This improved behavior stems from the calibrated BGK's enhanced fidelity to the Boltzmann dynamics through velocity-dependent collision parameters.

In Fig.~\ref{SLT}, we show the $L_2$ relative error for short-time behavior ($T = 2$) and long-time behavior ($T = 20$) of different methods with $K = 50$ samples for expected value and $L = 16300$ samples for control variate.
For this problem, initial perturbations induce significant variation across samples at the onset. 
As time evolves, all samples converge toward their respective equilibrium, which exhibit minimal inter-state discrepancies. 
Consequently, the MC method rapidly reduces uncertainty errors during this transient phase, followed by stabilization as equilibrium is approached.
Compared to the MC method, both SAPNN($f_B$) and SAPNN($f_B^*$) can significantly reduce the error. 
Since $f_B^*$ is derived from the calibrated BGK model, which more closely approximates the Boltzmann dynamics than the standard BGK model, SAPNN($f_B^*$) notably outperforms  SAPNN($f_B$).
We also observe that, due to the inherent discrepancy between the BGK and Boltzmann models, the error curves for SAPNN($f_B$) and SAPNN($f_B^*$) closely resemble those of $\Vert \mathbb{E}[f_B] - \mathbb{E}[f_{Bolt}] \Vert_1$ and $\Vert \mathbb{E}[f_B^*] - \mathbb{E}[f_{Bolt}] \Vert_1$ in Fig.~\ref{EM}, respectively.
Furthermore, the SAPNN($f_B$, $f_B^*$) approach, based on the multiple variance reduction Monte Carlo method, achieves the lowest error among all methods compared.
\begin{figure}[tb]
    \begin{center}
        \mbox{
        {\includegraphics[width = 0.4 \textwidth, trim=-18 0 0 -18,clip]{./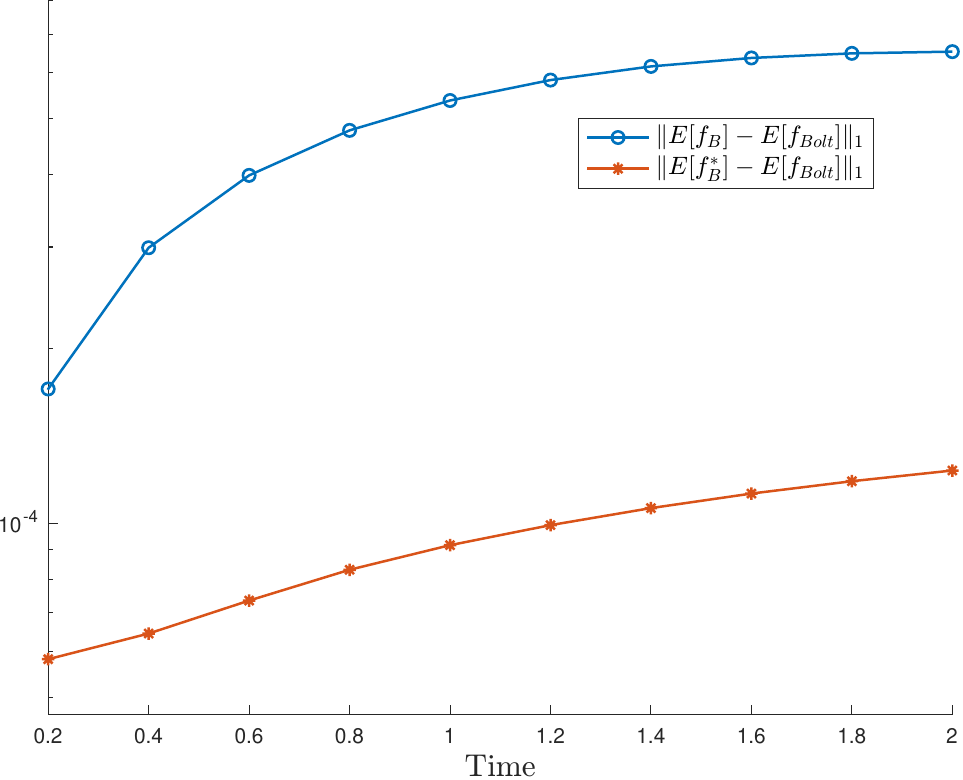}}
        {\includegraphics[width = 0.4 \textwidth, trim=-18 0 0 -18,clip]{./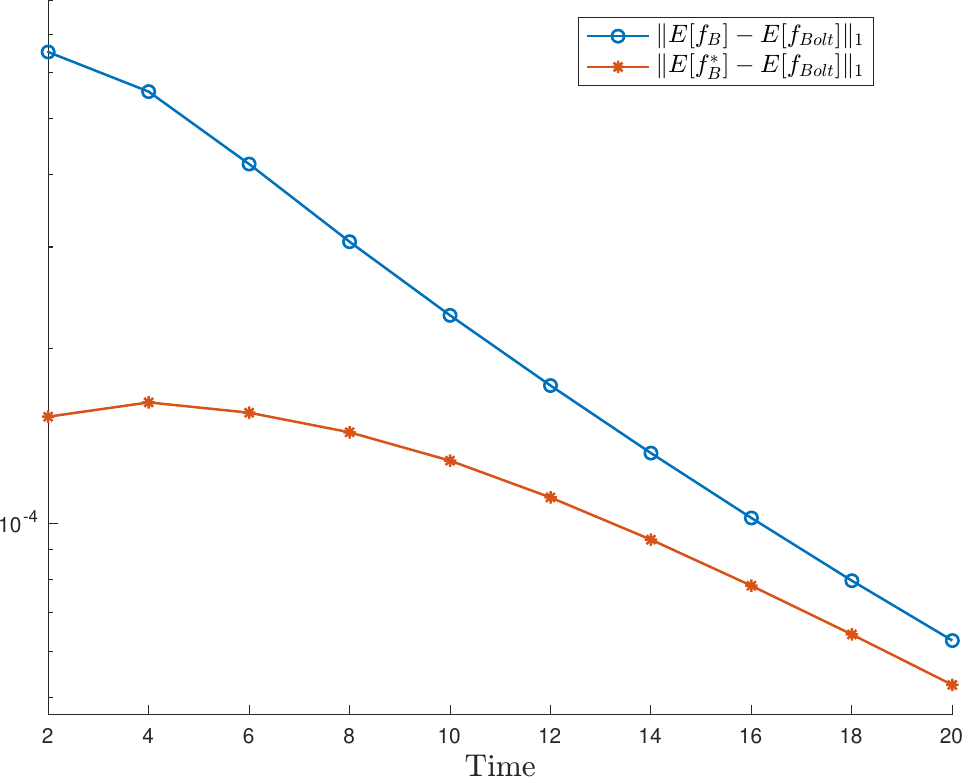}}
        }\\
        \caption{\sf Error of model for Example \ref{exam1}. Left: Short-time behavior; Right: Long-time behavior.}
        \label{EM}
    \end{center}
\end{figure}

\begin{figure}[tb]
    \begin{center}
        \mbox{
        {\includegraphics[width = 0.4 \textwidth, trim=0 0 0 0,clip]{./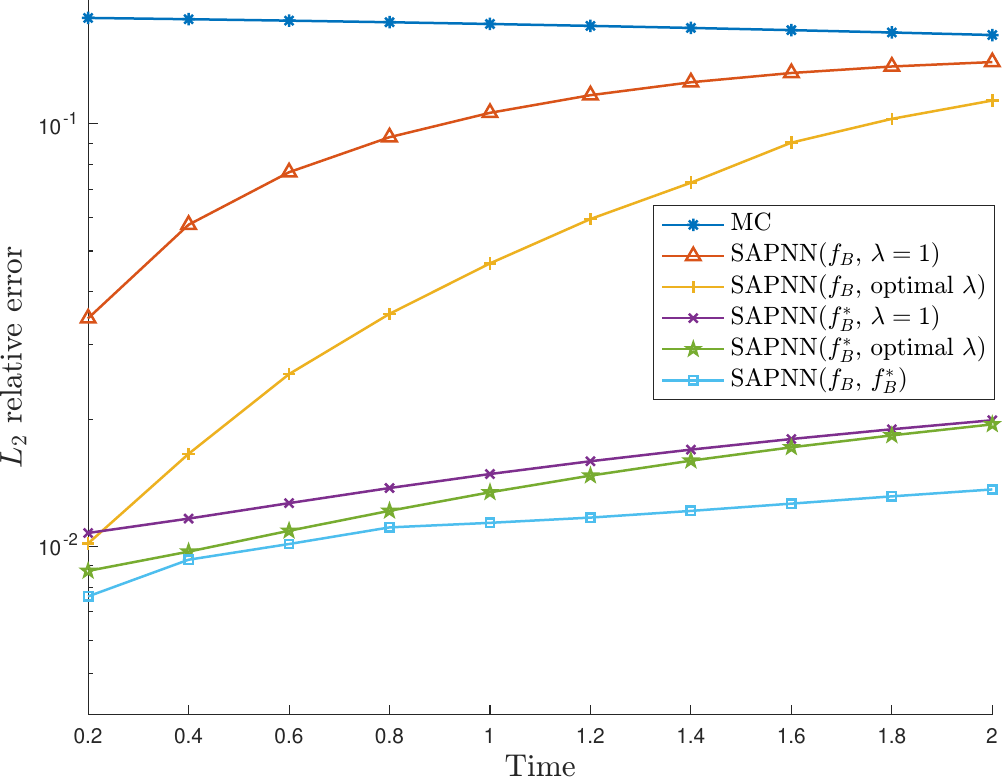}}
        {\includegraphics[width = 0.4 \textwidth, trim=0 0 0 0,clip]{./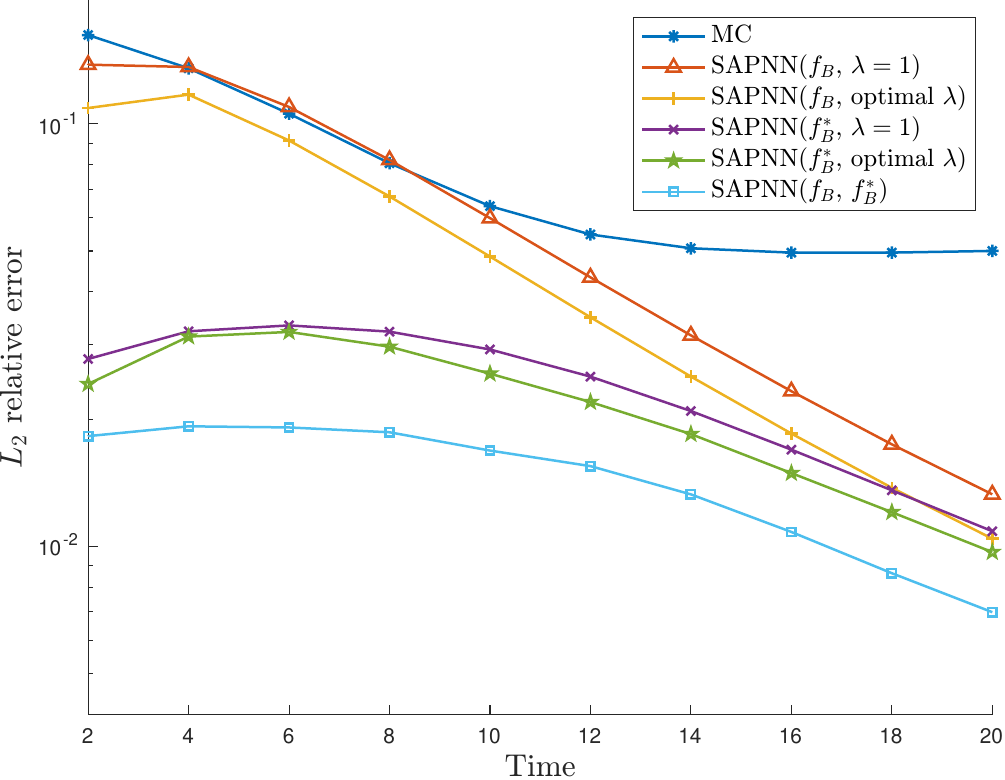}}
        }\\
        \caption{\sf $L_2$ relative errors of $\mathbb{E}[f]$ for Example \ref{exam1}. The number of samples used to compute the expected value and to construct the control variate are $K = 50$ and $L = 16300$, respectively. Left: Short-time behavior; Right: Long-time behavior.}
        \label{SLT}
    \end{center}
\end{figure}

\begin{remark}
  In the two-bubble case study, where the parameters $\rho_0$, $\sigma$, and $d$ are fixed, the optimal BGK relaxation rate $\mu^*$ remains invariant. More generally, $\mu^*$ is expected to depend intrinsically on these parameters, as illustrated in Fig.~\ref{mu}. 
  Interestingly, in the context of space non-homogeneous problems, the method exhibits a significantly reduced sensitivity to the precise value of $\mu$. Our numerical experiments show that using the calibrated value $\mu^* = (6.3)^{-1}$ consistently yields satisfactory results. Further fine-tuning did not produce noticeable improvements and introduced additional computational cost.
This insensitivity likely stems from the dominant role played by transport dynamics and the stabilizing influence of spatial coupling, which mitigate the effect of local relaxation discrepancies. 
\end{remark}

\begin{figure}[hbt]
    \begin{center}
        \mbox{
        {\includegraphics[width = 0.4 \textwidth, trim=35 200 70 210,clip]{./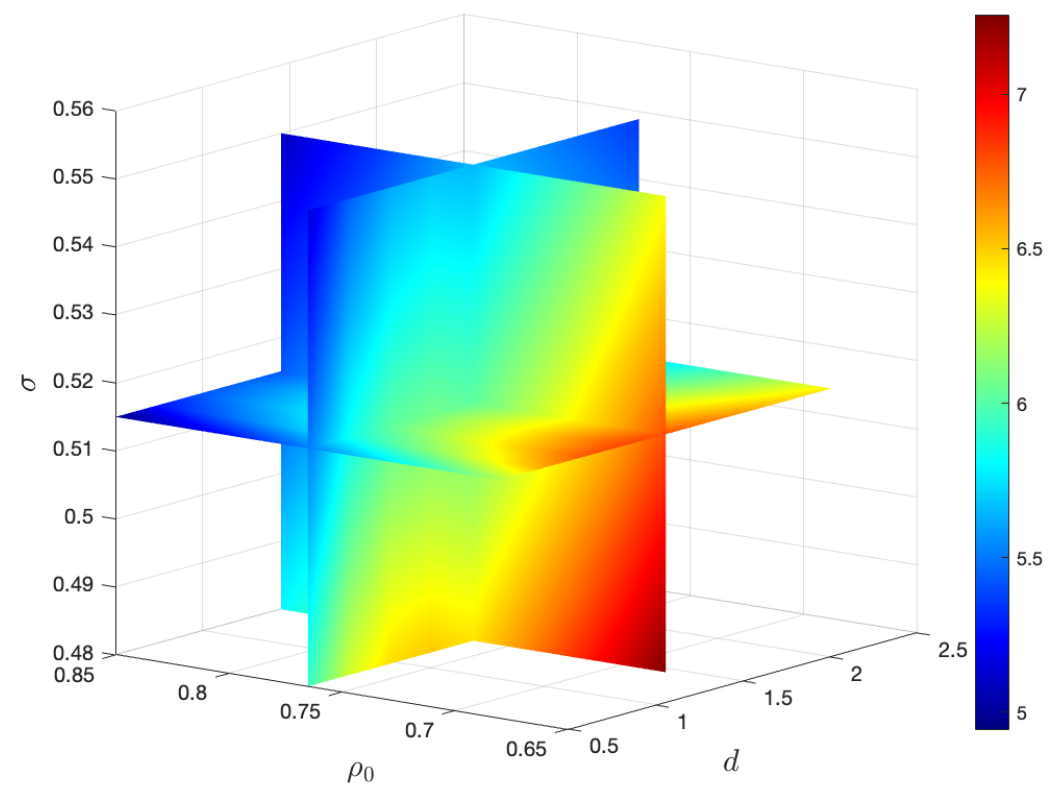}}
        }\\
        \caption{\sf Sensitivity of the optimal relaxation rate $(\mu^*)^{-1}$ to the initial data parameters.  }
        \label{mu}
    \end{center}
\end{figure}

\begin{figure}[htb]
    \begin{center}
        \mbox{
        {\includegraphics[width = 0.4 \textwidth, trim=0 0 0 0,clip]{./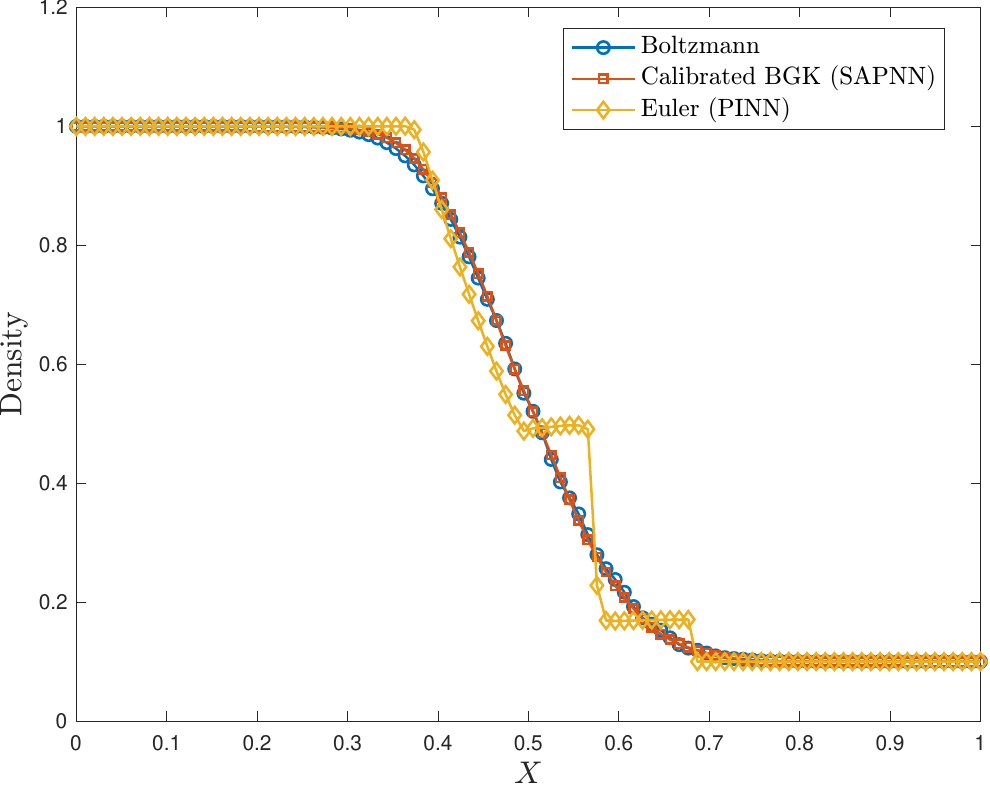}}
        {\includegraphics[width = 0.4 \textwidth, trim=0 0 0 0,clip]{./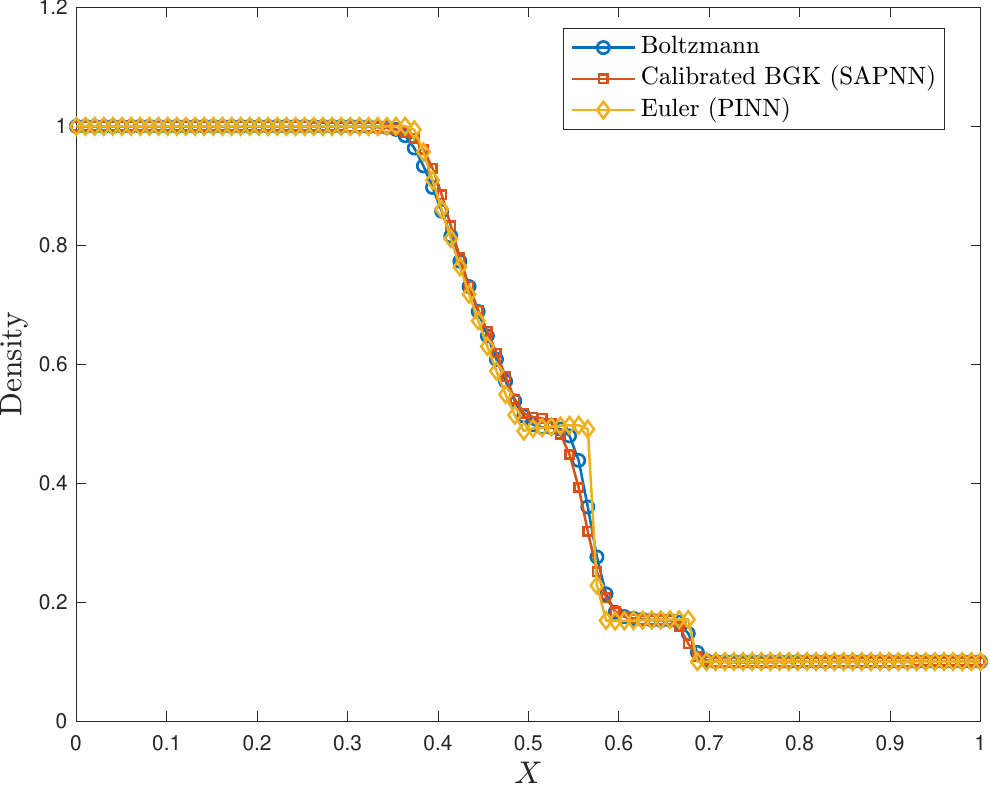}}}\\
        \mbox{
        {\includegraphics[width = 0.4 \textwidth, trim=0 0 0 -20,clip]{./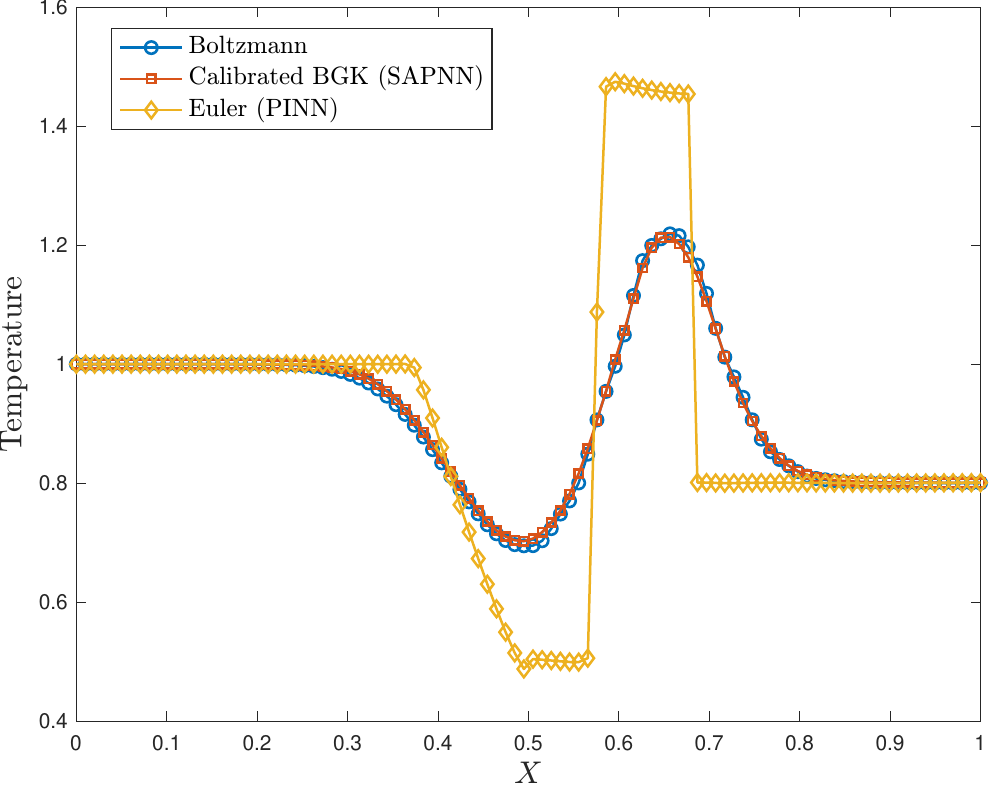}}
        {\includegraphics[width = 0.4 \textwidth, trim=0 0 0 -20,clip]{./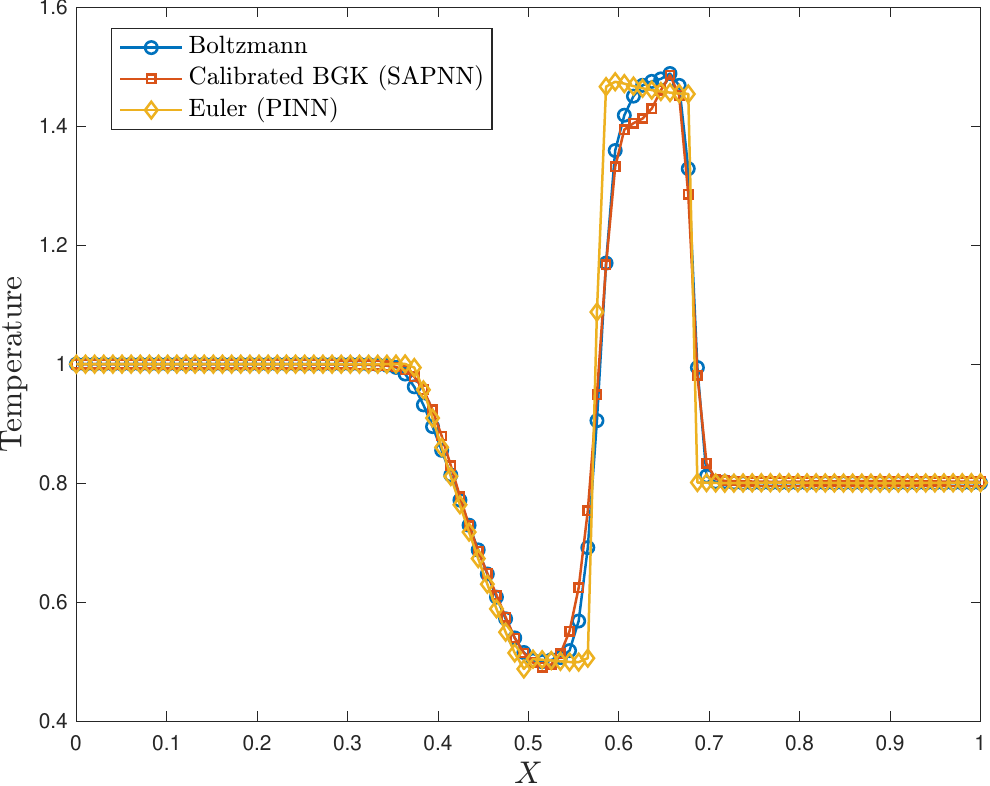}}}\\
        \caption{\sf Expectation for Example \ref{exam3}. Left: $\eps = 10^{-2}$; Right: $\eps = 2 \times 10^{-4}$; Top: Density; Bottom: Temperature.}
        \label{Exp3}
    \end{center}
\end{figure}

\subsection{Space non homogeneous Boltzmann equation with uncertain data}
In the subsequent examples, the available measurement data are randomly sampled and confined to the temporal subdomain $t \in [0, 3T/5]$. 
All Euler PINN models are trained using $60$ samples with a fully connected neural network of architecture $100 \times 8$. 
The BGK SAPNN models are trained on $35$ samples and comprises two components: the first employs a $150 \times 8$ fully connected network to approximate the BGK equation, while the second uses a $100 \times 8$ network to model the evolution of macroscopic quantities.

\subsubsection{Sod problem} \label{exam3}
In this example, we will consider the Sod test with the following uncertain initial data
\begin{equation*}
\begin{aligned}
  &\rho_0(x) = 1, \quad &&T_0(z, x) = 1 + sz \quad &&{\rm if}\; 0 \leq x \leq 0.5 \\
  &\rho_0(x) = 0.125, \quad &&T_0(z, x) = 0.8 + sz \quad &&{\rm if}\; 0.5 < x \leq 1 \\
\end{aligned}
\end{equation*}
with $s = 0.25$, $z$ uniform in $[-1, 1]$ and equilibrium initial distribution
\begin{equation*}
  f_0(z, x, v) = \frac{\rho_0(x)}{2 \pi} \exp{\left(-\frac{\vert v \vert^2}{2 T_0(z,x)}\right)}.
\end{equation*} 
The Sod problem is one of the most extensively studied Riemann problems and features three fundamental wave structures: a shock wave, a contact discontinuity, and a rarefaction wave.
The truncation of the velocity space as well the other numerical parameters are the same as in Example \ref{exam2}.
We consider the cases $\eps = 10^{-2}$ and $\eps = 2 \times 10^{-4}$ for the Boltzmann equation, and a calibrated relaxation parameter $(\mu^*)^{-1} \approx 6.3$ for the BGK model with some random Boltzmann data for training.
With the calibrated parameter $\mu^*$, the BGK solution aligns more closely with that of the Boltzmann equation.
The final simulation time is set to $T = 0.0875$. 

\begin{figure}[htb]
    \begin{center}
        \mbox{
        {\includegraphics[width = 0.4 \textwidth, trim=0 0 0 0,clip]{./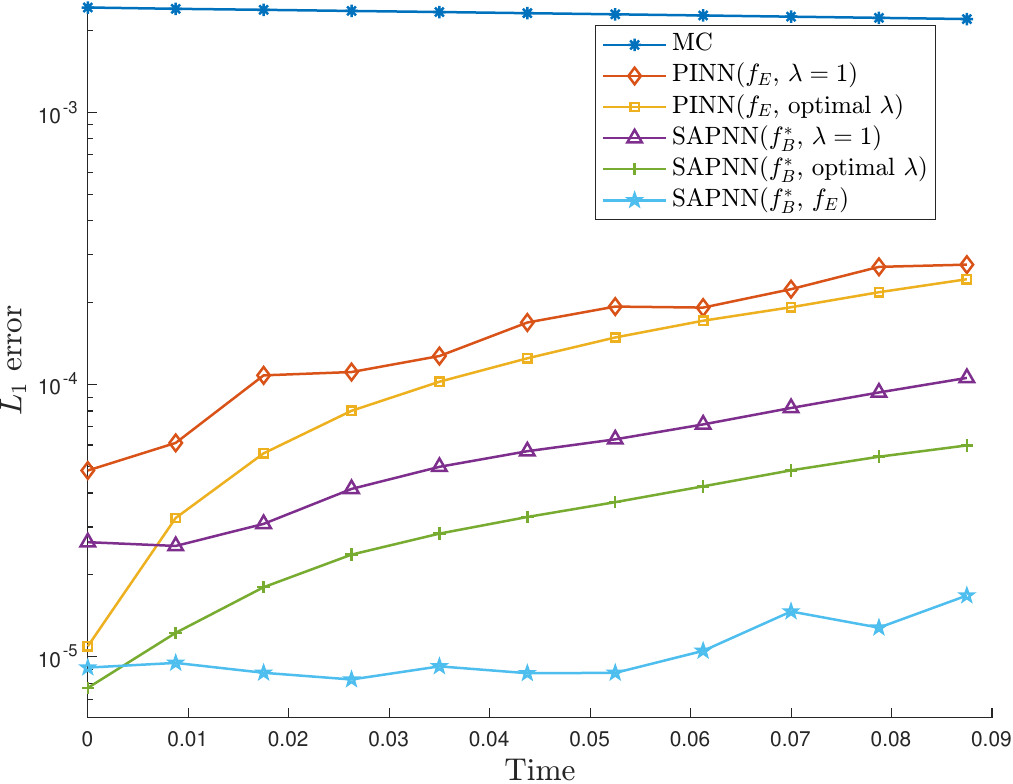}}
        {\includegraphics[width = 0.4 \textwidth, trim=0 0 0 0,clip]{./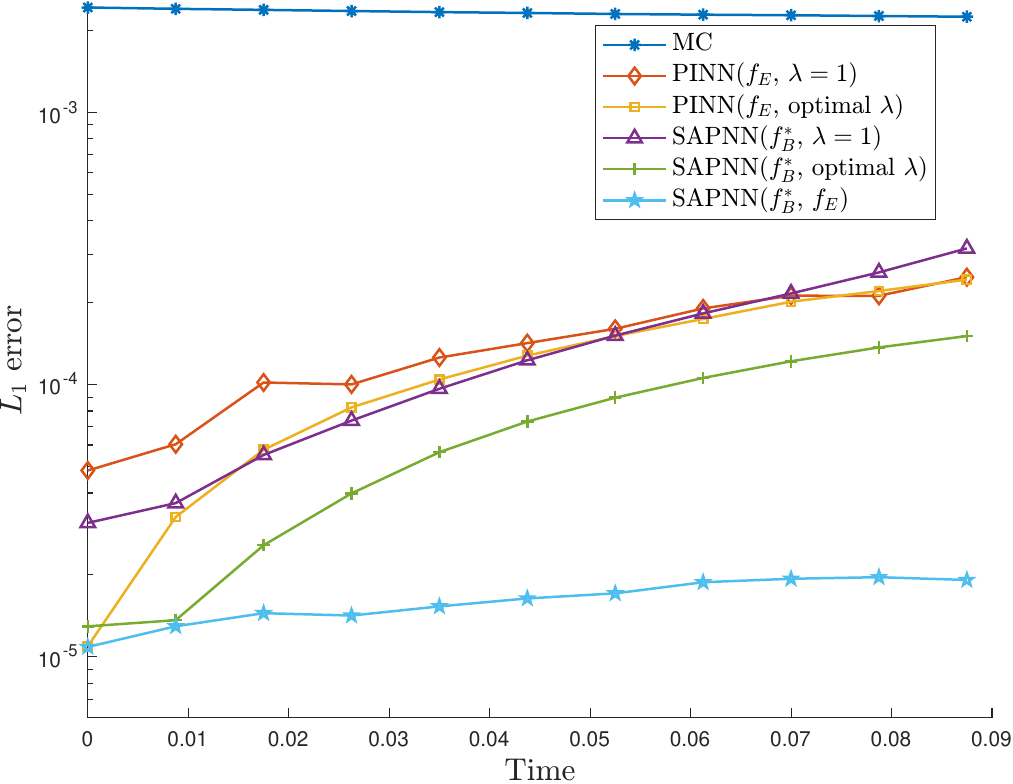}}}\\
        \mbox{
        {\includegraphics[width = 0.4 \textwidth, trim=0 0 0 -20,clip]{./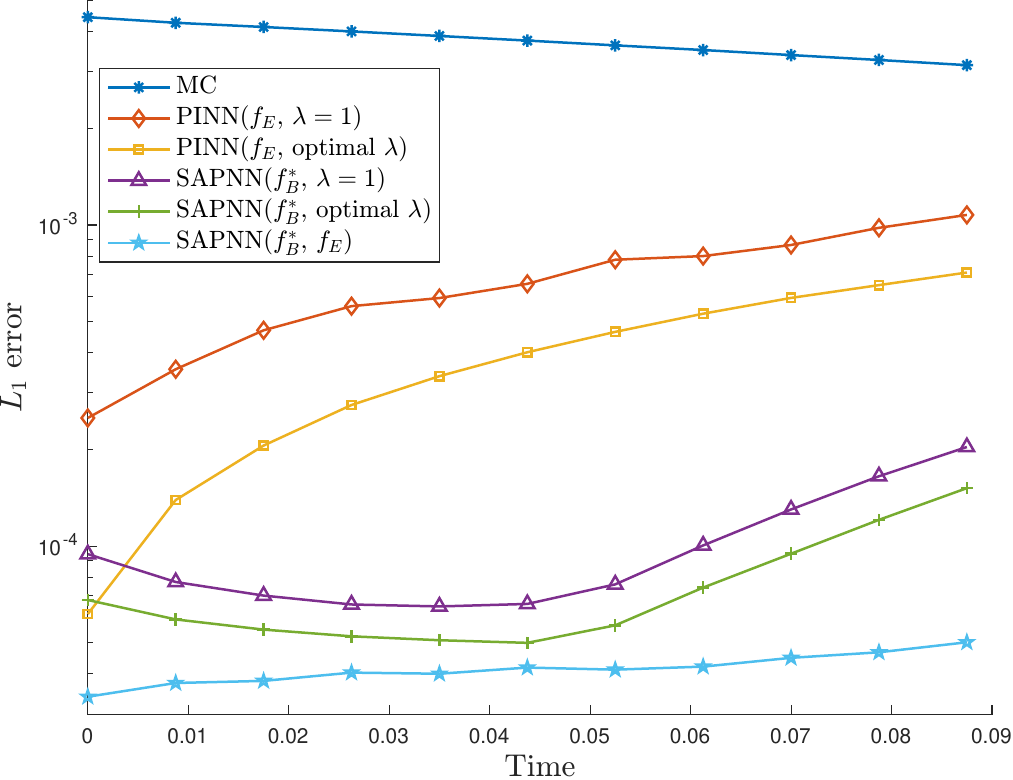}}
        {\includegraphics[width = 0.4 \textwidth, trim=0 0 0 -20,clip]{./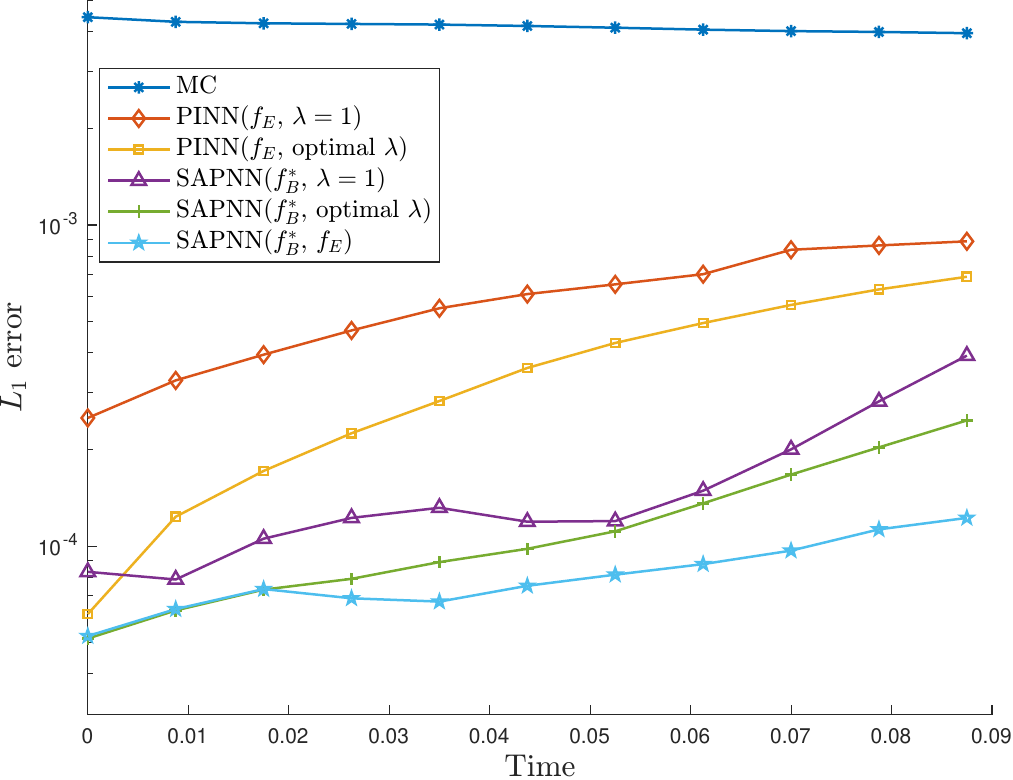}}}\\
        \caption{\sf $L_1$ error of expectation for Example \ref{exam3}. The number of samples used to compute the expected value and to construct the control variate are $K = 30$ and $L = 5000$, respectively. Left: $\eps = 10^{-2}$; Right: $\eps = 2 \times 10^{-4}$; Top: Density; Bottom: Temperature.}
        \label{SodError}
    \end{center}
\end{figure}

Figure~\ref{Exp3} presents the expected density and temperature profiles at the final time. 
We observe that the calibrated BGK model closely approximates the Boltzmann solution, and both models tend toward the Euler limit as the Knudsen number $\eps$ decreases.
Fig.~\ref{SodError} presents the $L_{1}$ errors in the expected density and temperature computed using MC, PINN($f_E$), SAPNN($f_B^*$), and SAPNN($f_B^*, f_E$).
The number of samples used to compute the expected value of the solution is $K = 30$ while the number of samples used to compute the control variate is $L = 5000$.
As anticipated, SAPNN($f_B^*$) yields a more substantial error reduction than PINN($f_E$), while SAPNN($f_B^*, f_E$)  method yields the lowest error among all methods considered.

\subsubsection{Lax problem} \label{exam4}
We consider the Lax problem with uncertainty in both density and temperature:
\begin{equation*}
  (\rho, u_x, u_y, T)(x, t = 0) = \left \{
  \begin{aligned}
    &(0.445 + 0.02z_1,\; 0.698,\; 0,\; 3.528) \quad &&{\rm if} \; 0 \leq x \leq 0.5 \\
    &(0.5,\; 0,\; 0,\; 0.571 + 0.02z_2) \quad &&{\rm if} \; 0.5 < x \leq 1
  \end{aligned}
  \right.
\end{equation*}
where the equilibrium initial distribution is given by
\begin{equation*}
  f_0(z, x, v) = \frac{\rho_0(z, x)}{2 \pi T_0(z, x)} \exp{\left(-\frac{\vert v - u_0(x)\vert^2}{2 T_0(z, x)}\right)}.
\end{equation*}
The initial condition of the Lax problem is similar to that of the Sod problem, as it also involves a compression wave and a rarefaction wave. However, the stronger contrast between the high-density compression region and the low-density rarefaction region in the Lax problem results in more intense wave propagation.
The velocity space is truncated to the domain $[-15, 15] \times [-15, 15]$, and the final time is set to $0.04375$.
We employ relaxation parameters $\eps = 10^{-2}$ and $2 \times 10^{-4}$ for the Boltzmann equation, while for the calibrated BGK model we set $(\mu^*)^{-1} = 6.3$, with some random Boltzmann data for training.

\begin{figure}[htb]
    \begin{center}
        \mbox{
        {\includegraphics[width = 0.4 \textwidth, trim=0 0 0 0,clip]{./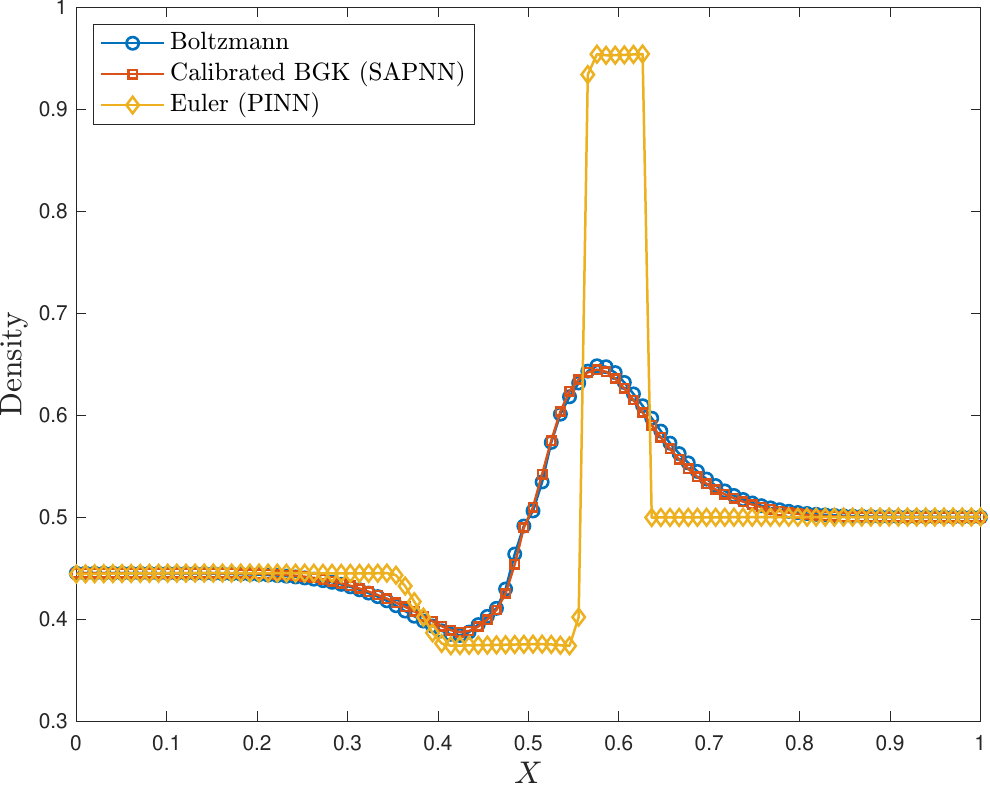}}
        {\includegraphics[width = 0.4 \textwidth, trim=0 0 0 0,clip]{./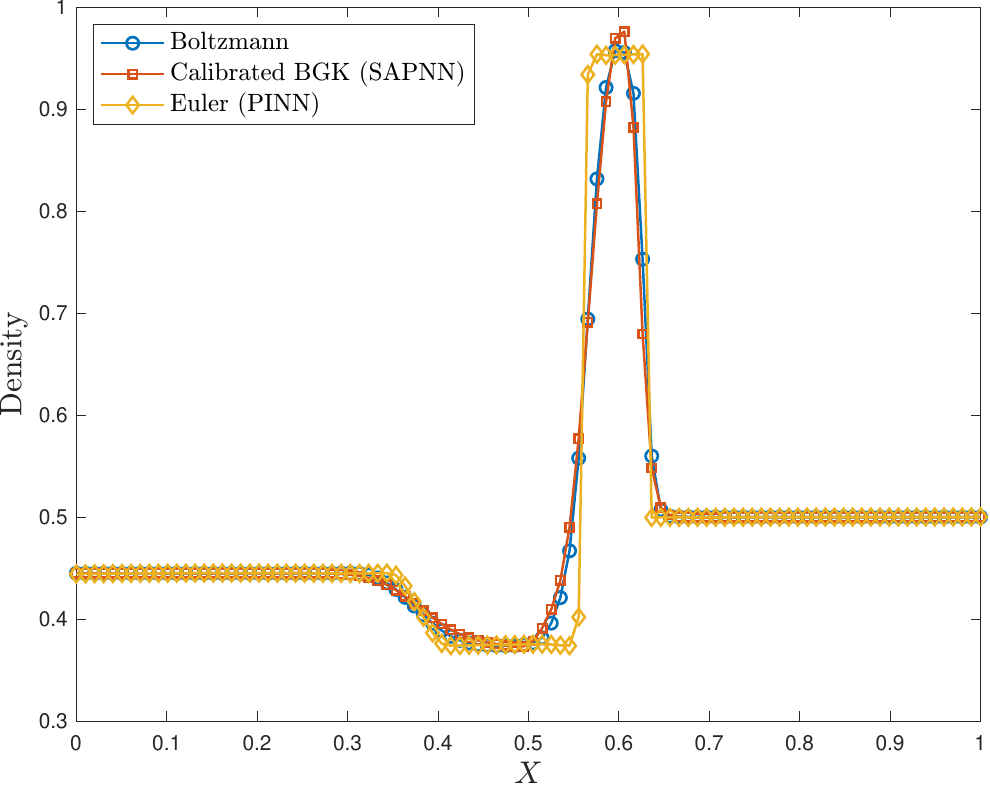}}}\\
        \mbox{
        {\includegraphics[width = 0.4 \textwidth, trim=0 0 0 -20,clip]{./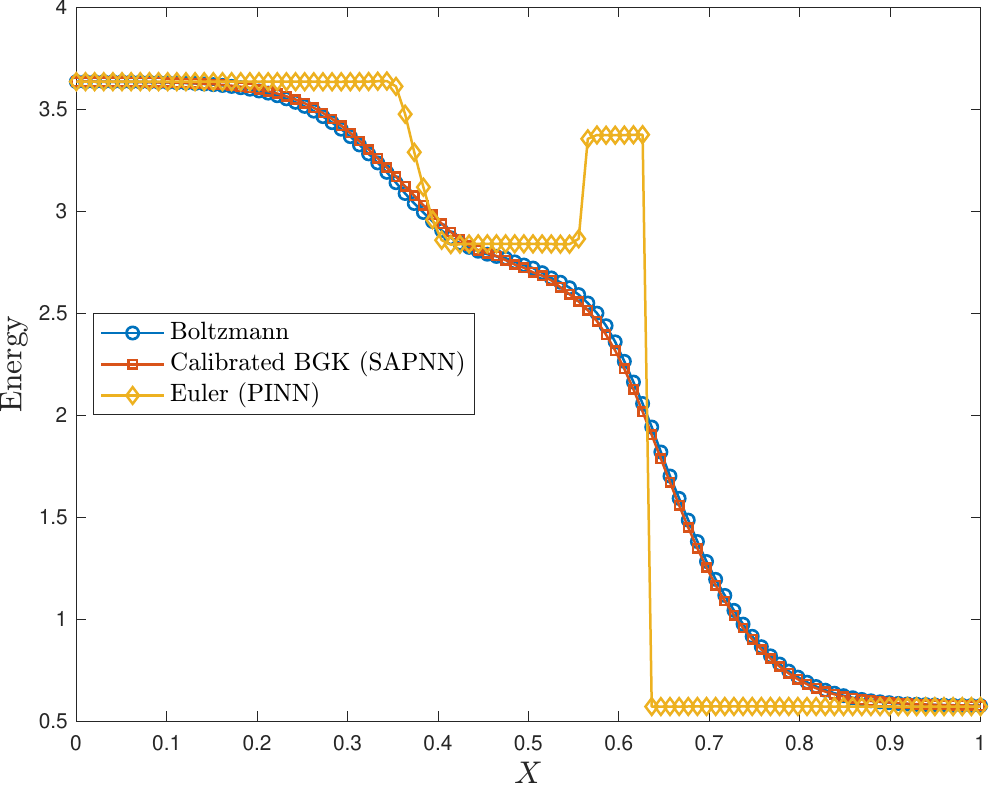}}
        {\includegraphics[width = 0.4 \textwidth, trim=0 0 0 -20,clip]{./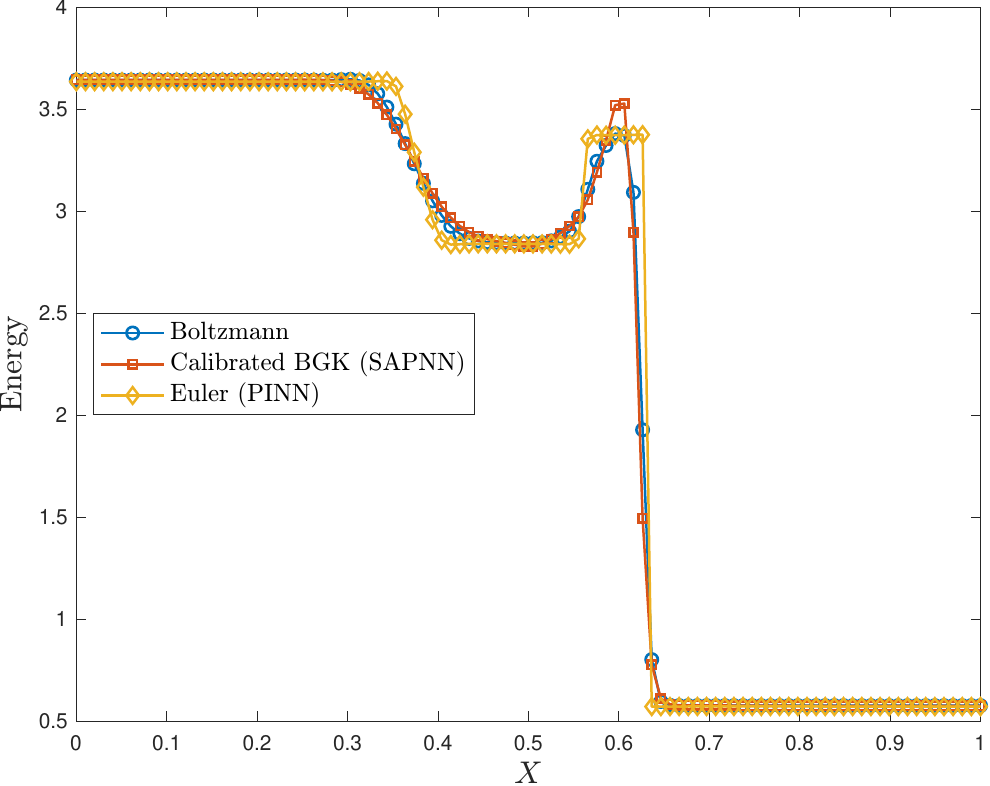}}}\\
        \caption{\sf Expectation for Example \ref{exam4}. Left: $\eps = 10^{-2}$; Right: $\eps = 2 \times 10^{-4}$; Top: Density; Bottom: Energy.}
        \label{Exp4}
    \end{center}
\end{figure}

We set the number of samples used to compute the expected value of the solution to $K = 30$, and the number of samples used for evaluating the control variate to $L = 5000$.
Figure~\ref{Exp4} displays the expected density and energy profiles at different Knudsen numbers, demonstrating that the SAPNN solvers for $f_B^*$ at $\eps = 2 \times 10^{-4}$ and $f_E$ accurately capture shock waves.
As shown in Fig.~\ref{LaxError}, SAPNN($f_B^*$) once again outperforms PINN($f_E$), and the additional performance gain provided by the combined SAPNN($f_B^*, f_E$) method is clearly evident.

\subsubsection{Double rarefaction problem}  \label{exam2} We consider an equilibrium initial distribution for the double rarefaction problem:
\begin{equation*}
  f_0(z, x, v) = \frac{\rho_0(x)}{2 \pi T_0(x)} \exp{\left(-\frac{\vert v - u_0(z, x)\vert^2}{2 T_0(x)}\right)}
\end{equation*} 
where
\begin{equation*}
  (\rho, u_x, u_y, T)(x, t = 0) = \left \{
  \begin{aligned}
    &(1,\; -2 + 0.05 z_1,\; 0,\; 0.4) \quad &&{\rm if} \; 0 \leq x \leq 0.5 \\
    &(1,\; 2 + 0.05 z_2,\; 0,\; 0.4) \quad &&{\rm if} \; 0.5 < x \leq 1
  \end{aligned}
  \right.
\end{equation*}
and the uncertainty is introduced through the velocity component $u_x$.

\begin{figure}[htb]
    \begin{center}
        \mbox{
        {\includegraphics[width = 0.4 \textwidth, trim=0 0 0 0,clip]{./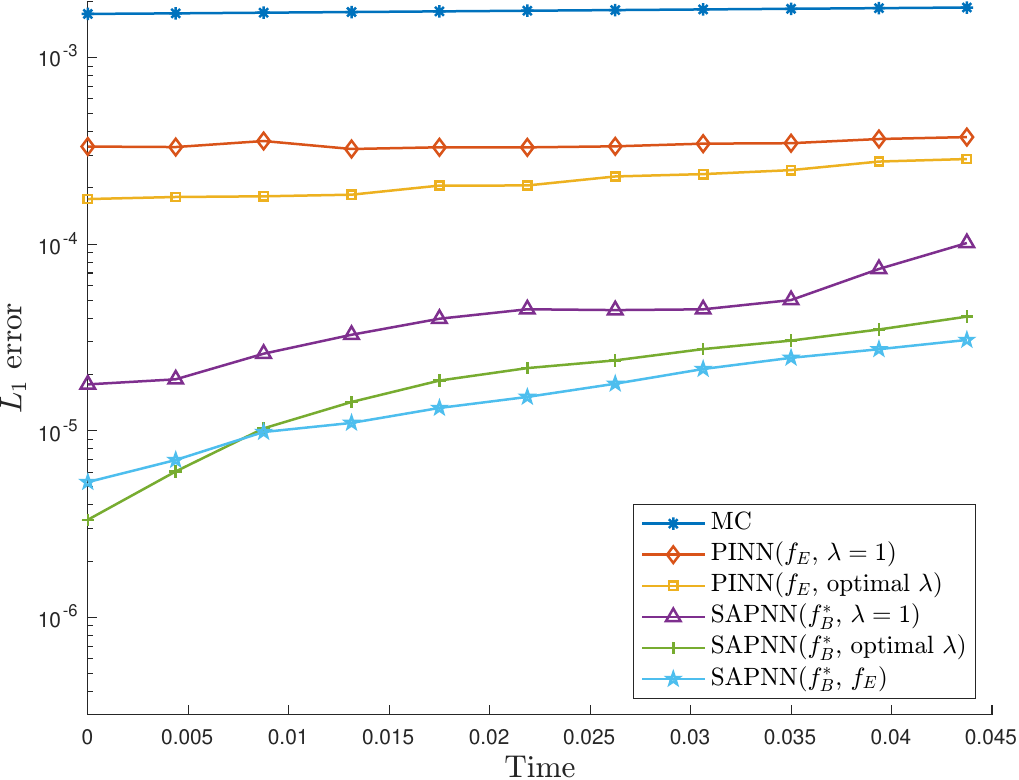}}
        {\includegraphics[width = 0.4 \textwidth, trim=0 0 0 0,clip]{./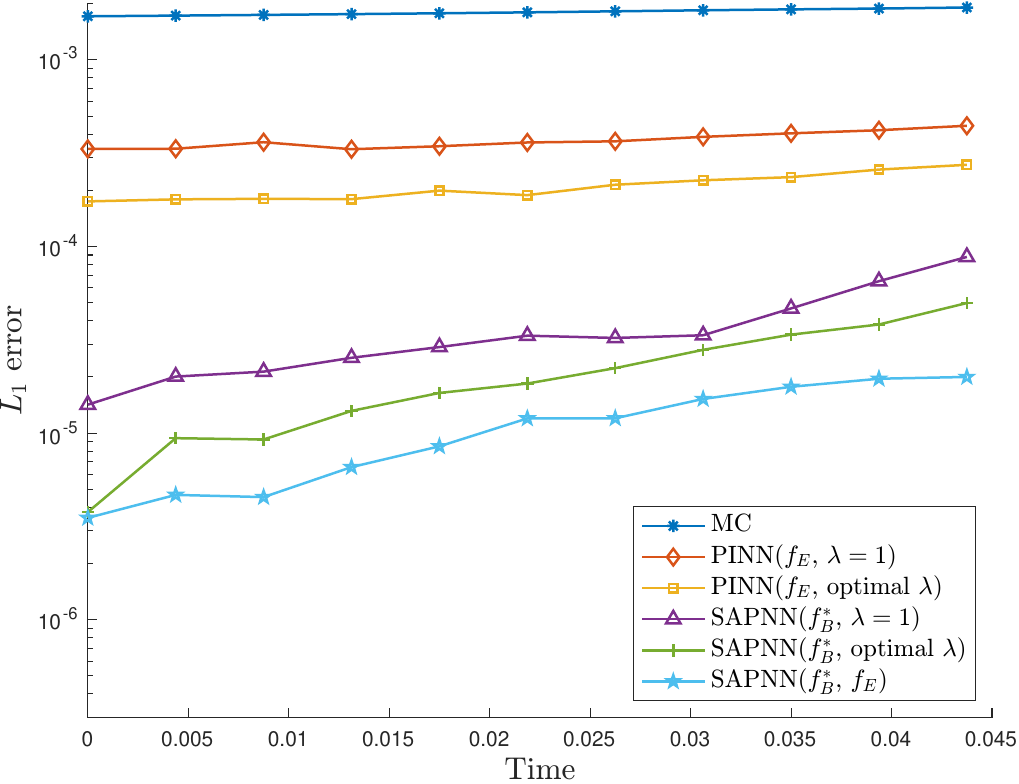}}}\\
        \mbox{
        {\includegraphics[width = 0.4 \textwidth, trim=0 0 0 -20,clip]{./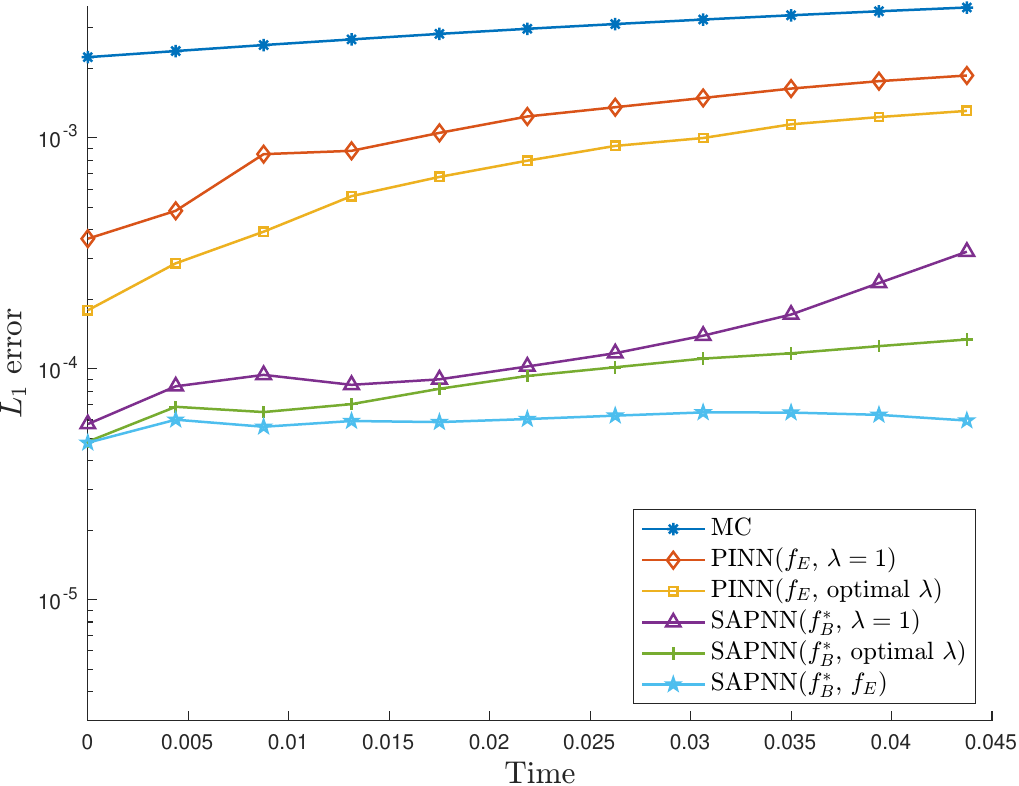}}
        {\includegraphics[width = 0.4 \textwidth, trim=0 0 0 -20,clip]{./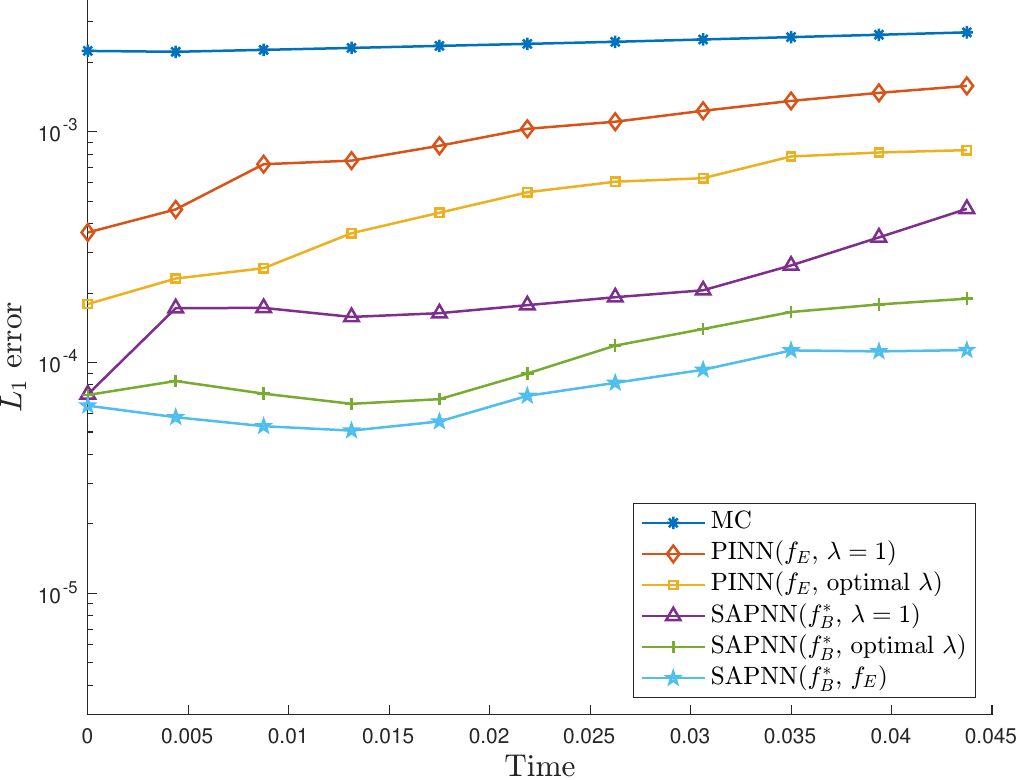}}}\\
        \caption{\sf $L_1$ error of expectation for Example \ref{exam4}. The number of samples used to compute the expected value and to construct the control variate are $K = 30$ and $L = 5000$, respectively. Left: $\eps = 10^{-2}$; Right: $\eps = 2 \times 10^{-4}$; Top: Density; Bottom: Energy.}
        \label{LaxError}
    \end{center}
\end{figure}

We note that this is a six-dimensional problem.
The initial condition consists of high pressure and high density on both the left and right sides, and low pressure and low density in the center, generating two rarefaction waves propagating in opposite directions and forming a vacuum or near-vacuum region in the middle.
The final time is set to $0.1$ with a relaxation parameter $\eps = 2 \times 10^{- 4}$.
The velocity space is truncated to the domain $[-8, 8] \times [-8, 8]$.

Figure~\ref{Exp2} shows the expected values of momentum and energy for both $f_B$ and $f_E$, demonstrating accurate performance.
In the left panel of Fig.~\ref{ErrorDR}, we observe that for $K = 30$ and $L = 2500$, both SAPNN($f_B$) and PINN($f_E$) significantly reduce the error compared to the MC method.
As anticipated, SAPNN($f_B$) achieves a more significant error reduction than PINN($f_E$), since the BGK model provides a closer approximation to the Boltzmann model than the Euler model.
In the right panel of Fig.\ref{ErrorDR}, increasing the sample size $L$ from $2500$ to $10000$ further improves the accuracy of SAPNN($f_B$).
However, for PINN($f_E$), increasing the sample size does not lead to further improvement, as both model error and generalization error become dominant.
Lastly, we emphasize that without a structure-preserving framework, PINN-based simulations become unstable in regions near vacuum.

\begin{figure}[htb]
    \begin{center}
        \mbox{
        {\includegraphics[width = 0.4 \textwidth, trim=0 0 0 0,clip]{./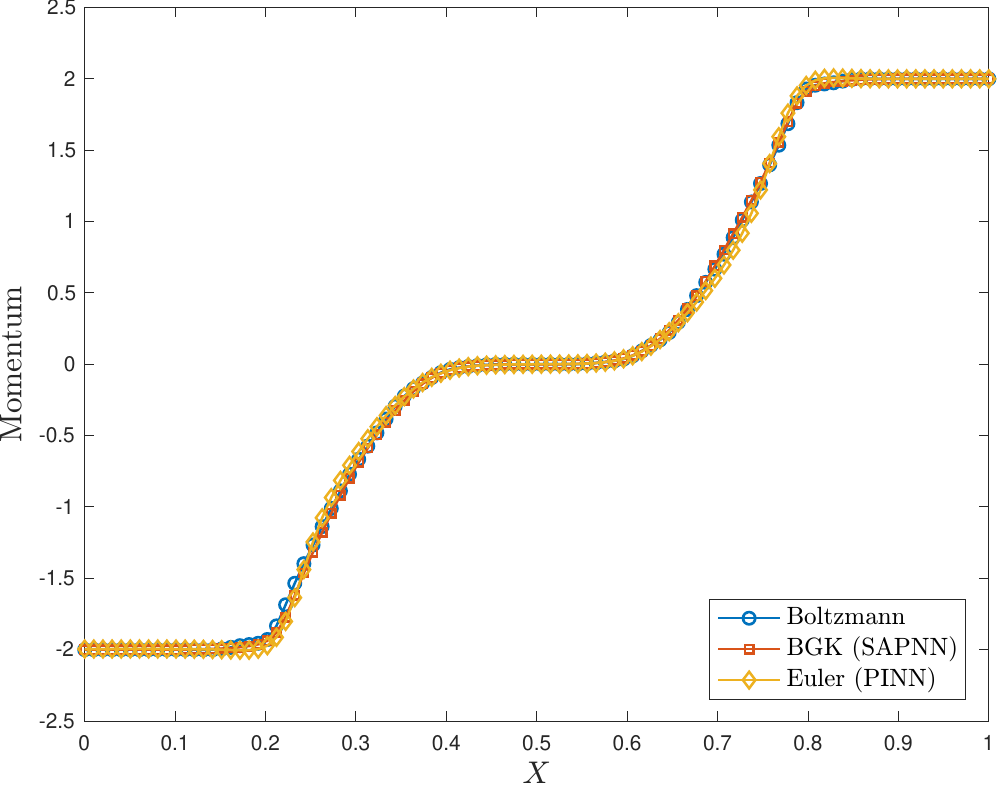}}
        {\includegraphics[width = 0.4 \textwidth, trim=0 0 0 0,clip]{./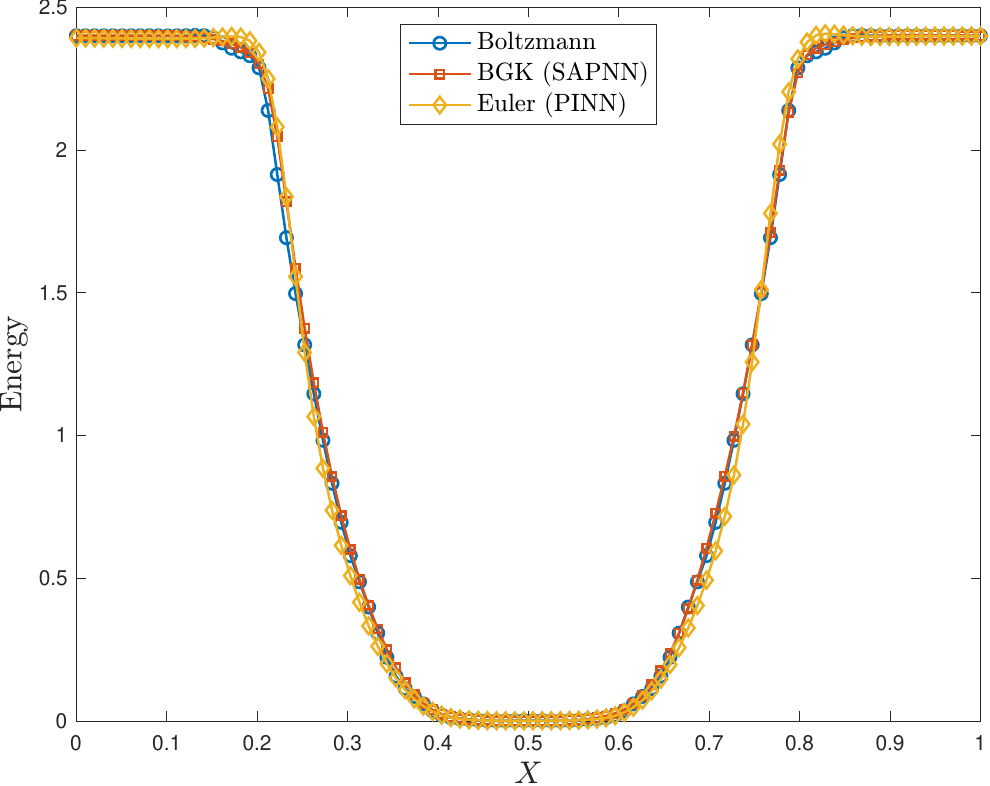}}}\\
        \caption{\sf Expectation for Example \ref{exam2}. Left: Momentum; Right: Energy.}
        \label{Exp2}
    \end{center}
\end{figure}

\begin{figure}[htb]
    \begin{center}
        \mbox{
        {\includegraphics[width = 0.4 \textwidth, trim=0 0 0 0,clip]{./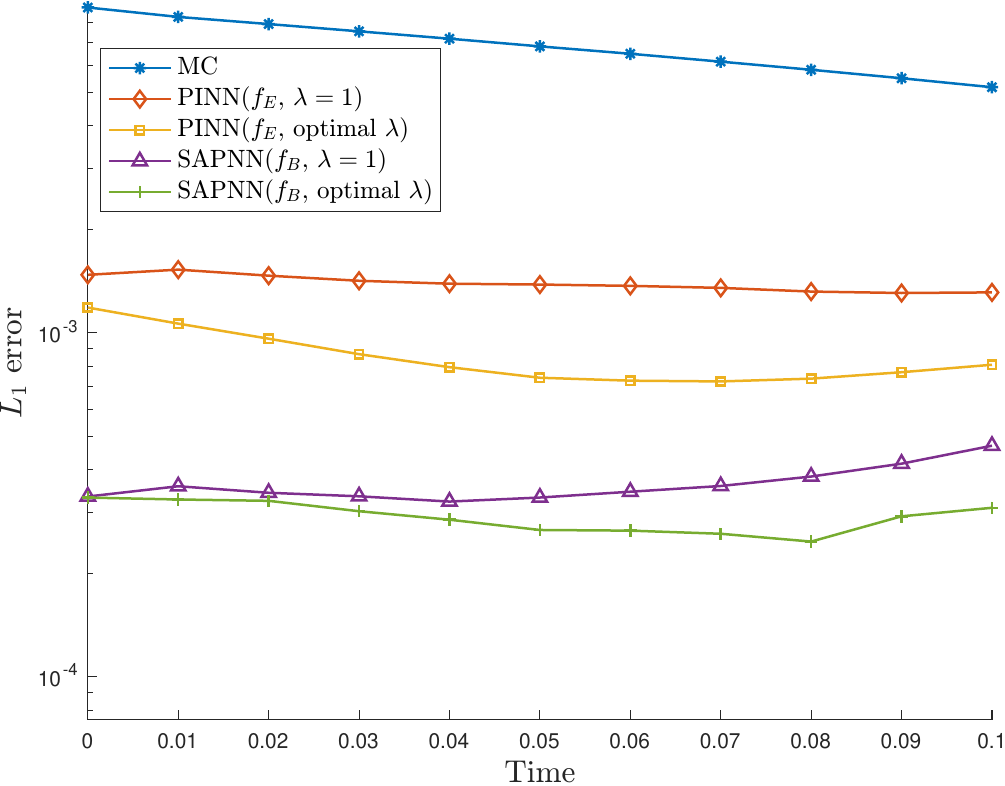}}
        {\includegraphics[width = 0.4 \textwidth, trim=0 0 0 0,clip]{./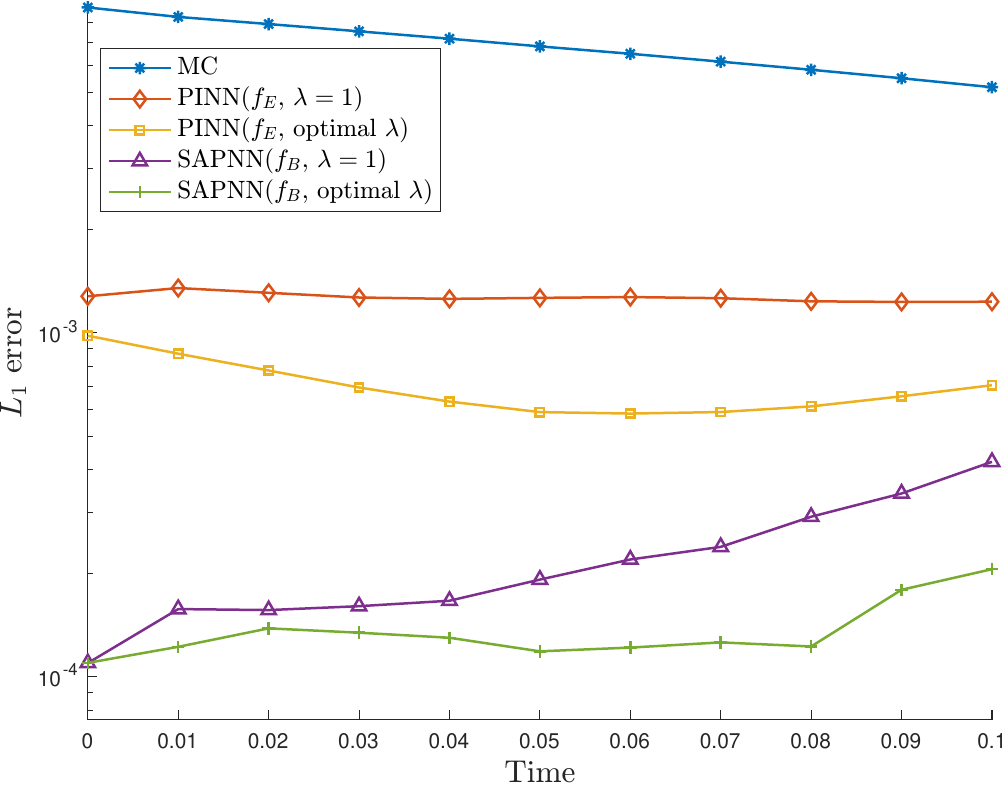}}}\\
        \mbox{
        {\includegraphics[width = 0.4 \textwidth, trim=0 0 0 -20,clip]{./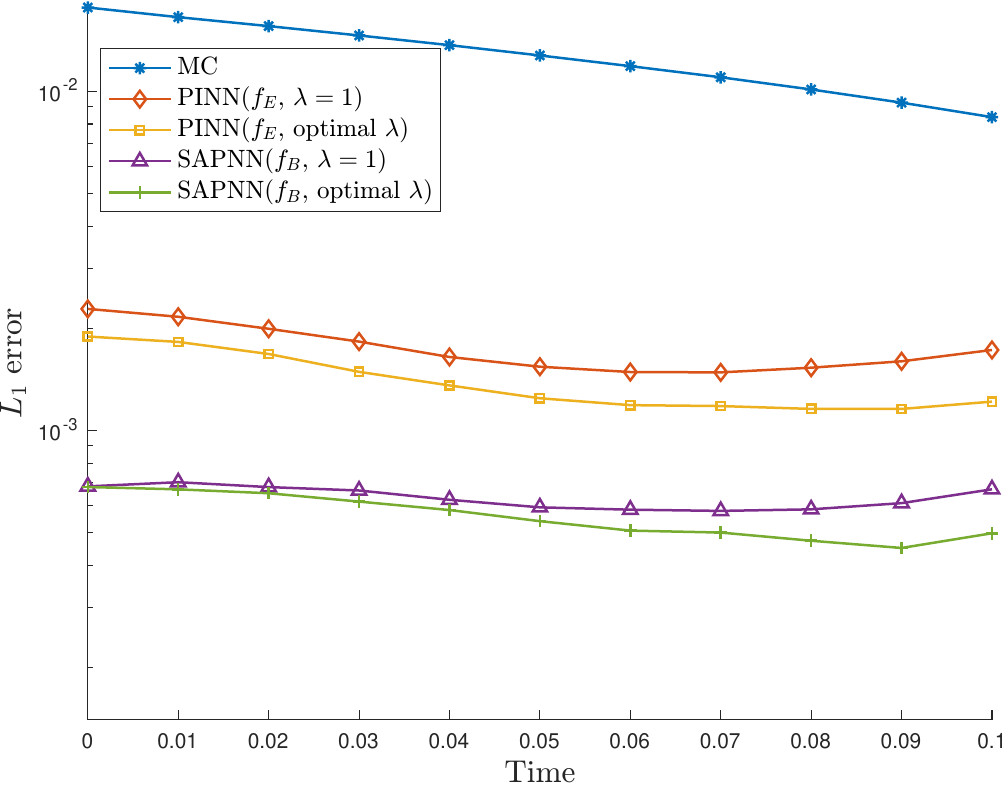}}
        {\includegraphics[width = 0.4 \textwidth, trim=0 0 0 -20,clip]{./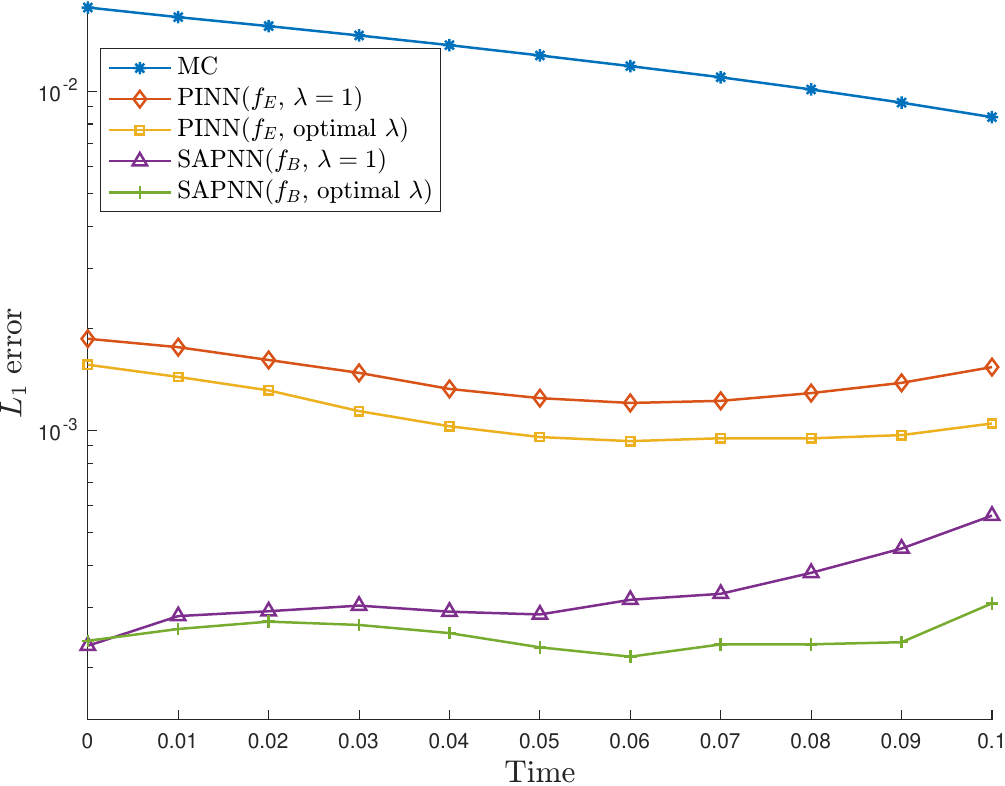}}}\\
        \caption{\sf $L_1$ errors of expectation for  Example \ref{exam2}. The number of samples used to compute the expected value is $K = 30$. Left: $L = 2500$; Right: $L = 10000$; Top: Momentum; Bottom: Energy.}
        \label{ErrorDR}
    \end{center}
\end{figure}

\section{Conclusion}
\label{sec6}
This work presents a neural network based multiscale control variate framework for uncertainty quantification in kinetic equations. By integrating structure and asymptotic preserving neural networks for BGK models and Euler equations enhanced with Boltzmann data with the control variate strategy, we achieve significant improvements in computational efficiency over traditional Monte Carlo sampling methods. The proposed SAPNN architecture ensures physical consistency and asymptotic-preservation even in multiscale settings, while its generalization to parametric uncertainties enables large-scale sample predictions. Numerical experiments for space homogeneous and non homogeneous settings demonstrate the framework's capability to mitigate the curse of dimensionality and reduce variance effectively. Future research will focus on extending this methodology to other situations where evaluating the high fidelity kinetic model is significantly demanding when high-dimensional uncertainties are present, like diffusion limits \cite{Faou19} and collisional plasmas \cite{Medaglia23} characterized by the Landau operator. Another interesting research direction would amount in constructing SAPNN aimed at learning the whole action of the surrogate operators \cite{kovachki2021neural,FrankRN23}.

\bibliographystyle{siamplain}
\bibliography{references}
\end{document}